\theoremstyle{plain}
\newtheorem{theorem}{Theorem}[section]
\newtheorem{proposition}{Proposition}[section]
\newtheorem{corollary}{Corollary}[section]
\newtheorem{example}{Example}[section]
\newtheorem{remark}{Remark}[section]
\numberwithin{equation}{section}
\title[Magnetic curves]
{Magnetic curves in the real special linear group}
\author[J. Inoguchi]{Jun-ichi Inoguchi}
\address[J. Inoguchi]
{Institute of Mathematics,
University of Tsukuba,
1-1-1 Tennodai,
Tsukuba, 350-0006,
Japan}
\email{inoguchi@math.tsukuba.ac.jp}
\author[M.~I.~Munteanu]{Marian Ioan Munteanu}
\address[M.~I.~Munteanu]
{University 'Al. I. Cuza' of Iasi, 
Faculty of Mathematics, Bd. Carol I, no.~11,
700506 Iasi, Romania}
\email{marian.ioan.munteanu@gmail.com}
\date{\today}
\begin{document}

\begin{abstract}
We investigate contact magnetic curves in the real special linear group of degree $2$.
They are geodesics of the Hopf tubes over the projection curve.
We prove that periodic contact magnetic curves in $\mathrm{SL}_2\mathbb{R}$ can be quantized in the 
set of rational numbers.
Finally, we study contact homogeneous magnetic trajectories in $\mathrm{SL}_2\mathbb{R}$ and show that
they project to horocycles in $\mathbb{H}^2(-4)$.
\end{abstract}

\keywords{magnetic field; hyperbolic Sasakian space form; Hopf torus; periodic curve}

\subjclass[2000]{53C15, 53C25, 53C30, 37J45, 53C80}

\maketitle

\section{Introduction}

A magnetic field on a Riemannian manifold is defined by a closed 2-form. This
definition comes from the fact that a closed 2-form on a Riemannian manifold can be
regarded as a generalization of static magnetic fields on a Euclidean 3-space.
See e.g. \cite{jm:Com87, jm:Sun93}. A magnetic curve is a trajectory of a magnetic 
field and it is a solution of a second order differential equation known as the {\em Lorentz
equation} associated to the magnetic field. Lorentz equation generalizes the equation of 
geodesics under arc length parametrization. Hence, we may say that magnetic trajectories 
are perturbations of geodesics. 
On the other hand, magnetic curves derive also from the variational problem of the Landau-Hall
functional. See e.g. \cite{BRCF05}. In the absence of a magnetic field, this functional
is nothing but the kinetic energy functional. It is well known that geodesics are critical
points of the energy functional. This is another argument for saying that magnetic trajectories
are generalizations of geodesics. In this sense, the geometric properties of magnetic
curves show features of the underlying manifold, exactly how geodesics do.

The relation between geometry and magnetic fields have a long history. As is well 
known, the notion of linking number can be traced back to Gauss' work on terrestrial 
magnetism (see \cite{RN2011}). The linking number connects topology and 
Amp{\`e}re's law in magnetism. De Turck and Gluck studied magnetic curves and linking numbers in 
the $3$-sphere $\mathbb{S}^3$ and hyperbolic $3$-space $\mathbb{H}^3$ \cite{DeTG2008, DeTG2013}.

On the other hand, contact structures 
play a important role in $3$-dimensional topology.
In \cite{DIMN1}, we have studied magnetic trajectories 
in Sasakian manifolds with respect to the magnetic field 
derived from the contact structure (\textit{contact magnetic field}).
Even that we employ physical terms, when we study contact magnetic curves,
only we need is to get perturbations of geodesics obtained from the (almost)
contact structure on the manifold. For readers who are not familiar with 
magnetic fields, it is enough to consider that these trajectories are special
curves obtained as solutions of the Lorentz equation, which generalize the equation of geodesics.

In 2007, Taubes \cite{Taub07} proved the generalized Weinstein conjecture in dimension 3, namely,
{\it on a compact, orientable, contact $3$-manifold the Reeb vector field $\xi$ has at 
least one closed integral curve}. Linked to this problem it is important to investigate
the existence of periodic magnetic trajectories of the contact magnetic field defined by
$\xi$ in Sasakian manifolds, in particular in Sasakian space forms. 

In 2009, Cabrerizo et al. \cite{CFG09} have been looked for periodic orbits of the contact magnetic
field on the unit sphere $\mathbb{S}^3$. See also \cite{Bar08}. 
The present authors \cite{IM1} studied periodicity of 
contact magnetic trajectories on the $3$-dimensional Berger sphere equipped with 
the canonical homogeneous Sasakian structure of constant 
$\varphi$-sectional curvature $c>-3$. The Berger $3$-sphere 
$\mathcal{M}^{3}(c)$ equipped with Sasakian structure of 
constant $\varphi$-sectional curvature $c$ has the structure of a principal 
circle bundle. The base space of this fibering is the $2$-sphere $\mathbb{S}^{2}(c+3)$
of curvature $c+3$. This fibering includes the classical Hopf fibering 
$\mathbb{S}^{3}(1)\to\mathbb{S}^{2}(4)$.

There are three classes of $3$-dimensional simply connected 
Sasakian space forms of constant $\varphi$-sectional 
curvature $c${\rm:}
\begin{itemize}
\item the Berger $3$-sphere if $c>-3$; in particular the unit $3$-sphere  
$\mathbb{S}^3(1)$ if $c=1$;
\item the Heisenberg group $\mathrm{Nil}_3$ if $c=-3$;
\item the universal covering of $\mathrm{SL}_2\mathbb{R}$ if $c<-3$.
\end{itemize}
Note also that these spaces (with $c=1$, $c\leq -3$) are model spaces of Thurston geometries.
As a \textit{hyperbolic counterpart} of the Berger $3$-sphere, the 
special linear group $\mathrm{SL}_2\mathbb{R}$ admits a 
structure of principal circle bundle over the hyperbolic $2$-space 
$\mathbb{H}^{2}(c+3)$ of curvature $c+3<0$. 
The special linear group equipped with canonical left 
invariant Sasakian structure has constant $\varphi$-sectional curvature $c$.
This paper is a continuation of previous papers \cite{DIMN1, IM1}. 
Our aim is to study periodicity of contact magnetic trajectories of $\mathrm{SL}_2\mathbb{R}$.

This paper is structured as follows.
Firstly, considering the Hopf fibering from $\mathrm{SL}_2\mathbb{R}$ to $\mathbb{H}^2(-4)$,  
we show that contact magnetic curves in $\mathrm{SL}_2\mathbb{R}$ are geodesics of the Hopf tubes
over the projection curve. Then, we write the differential equations satisfied by the magnetic
trajectories in $\mathrm{SL}_2\mathbb{R}$. The key of this part is the use of Iwasawa decomposition.
In the following, we find a periodicity condition for contact magnetic curves in $\mathrm{SL}_2\mathbb{R}$.
We show that periodic magnetic curves in $\mathrm{SL}_2\mathbb{R}$ can be quantized in the 
set of rational numbers. 
Here we emphasize that periodic contact magnetic curves 
in $\mathrm{SL}_2\mathbb{R}$ are knots in a solid torus.
Knots in solid tori have been paid attention of knots researchers, see e.g., \cite{GM}. 
For torus knots in ideal magnetohydrodynamics, see \cite{OR}.
Periodic magnetic curves in $\mathrm{SL}_2\mathbb{R}$ provides nice examples of torus knots.

We pay a special attention on Legendre curves,
that is those curves whose contact angle is $\pi/2$.
It should be remarked that the notion of Legendre curve only depends 
on the contact structure of $\mathrm{SL}_2\mathbb{R}$.

Finally, we are also interested in the 
study of magnetic trajectories in $\mathrm{SL}_2\mathbb{R}$, which project to horocycles in
$\mathbb{H}^2(-4)$. Thus, in Section 6, we study homogeneous magnetic trajectories in
$\mathrm{SL}_2\mathbb{R}$, that is contact magnetic curves which are obtained from one-parameter
subgroups $\exp(tX)$, for $X\in\mathfrak{sl}_2\mathbb{R}$.

\section{Preliminaries}

\subsection{Magnetic curves} 
The motion of the charged particles in a 
Riemannian manifold under the action of 
the magnetic fields are known as magnetic curves. More precisely, 
a \textit{magnetic field} $F$ on a 
Riemannian manifold $(M, g)$ is a closed $2$-form $F$ and 
the \textit{Lorentz force} associated to $F$ 
is a tensor field $\phi$ of type $(1,1)$ such that
\begin{equation}\label{eqLforce}
F(X,Y) = g(\phi X,Y), \ \  X,Y \in \mathfrak{X}(M).
\end{equation}
A curve $\gamma$ on $M$ that satisfies the \textit{Lorentz equation} 
\begin{equation}
\label{eqL}
\nabla_{\dot\gamma}\dot\gamma=\phi(\dot\gamma),
\end{equation}
is called \textit{magnetic trajectory} of $F$ or simply a \emph{magnetic curve}.
Here $\nabla$ denotes the Levi-Civita connection associated to the metric $g$.
A magnetic field $F$ is said to be {\em uniform} if $\nabla F = 0$.  

It is well-known that the magnetic trajectories have constant speed. 
When the magnetic curve $\gamma(s)$ is arc length parametrized, 
it is called a \textit{normal magnetic curve}. 

The dimension $3$ is rather special, since it allows us to 
identify 2-forms with vector fields via the Hodge $\star$ operator 
and the volume form $dv_g$ of the (oriented) manifold. 
In this way, magnetic fields may be identified with divergence free 
vector fields by 
\begin{equation*}
\label{mag_FV}
F_{V}=\iota_{V}dv_{g}.
\end{equation*}
Magnetic fields $F$ corresponding to Killing vector fields are usually known as
\textit{Killing magnetic fields}. 
Their trajectories, called \textit{Killing magnetic curves}, are of great importance
since they are related to the Kirchhoff elastic rods. 
See \textit{e.g.}, \cite{BCFR07, BarrosRomero}.

\subsection{Sasakian manifolds} 
A {\em $(\varphi, \xi, \eta)$ structure} on 
a manifold $M$ is defined by a field $\varphi$ of endomorphisms of 
tangent spaces, a vector field $\xi$ and a 1-form $\eta$ satisfying
\begin{equation*}
\eta(\xi)=1,\ \  \varphi^{2}=-\mathrm{I}+\eta\otimes\xi, \ \ \varphi\xi=0,\ \ \eta \circ \varphi = 0.
\end{equation*}
If $(M, \varphi, \xi, \eta)$ admits a compatible Riemannian metric $g$, namely
\begin{equation*}
g(\varphi{X},\varphi{Y})=g(X,Y)-\eta(X)\eta(Y),
{\rm \ for\  all\  } X, Y\in \mathfrak{X}(M),
\end{equation*}
 then $M$ is said to have an 
{\em almost contact metric structure}, and $(M, \varphi, \xi, \eta, g)$ 
is called an {\em almost contact metric manifold}. 
Consequently, we have that $\xi$ is unitary
and $\eta(X) = g(\xi, X)$, for any $X\in\mathfrak{X}(M)$.

We define a 2-form $\Omega$ on $(M, \varphi, \xi, \eta, g)$ by 
\begin{equation}\label{2form}
\Omega(X,Y) = g(\varphi X, Y),
{\rm \ for\  all\  } X, Y\in \mathfrak{X}(M),
\end{equation}
called {\em the fundamental $2$-form} 
of the almost contact metric structure $(\varphi, \xi, \eta, g)$.

If $\Omega = d\eta$, 
then  $(M, \varphi, \xi, \eta, g)$ is called a {\em contact metric manifold}. Here $d\eta$ is defined by 
$
d\eta(X,Y) = \frac{1}{2}\big(X\eta(Y) - Y\eta(X)-\eta([X,Y])\big),
$
for any $X, Y\in \mathfrak{X}(M)$.
On a contact metric manifold $M$, the 1-form $\eta$ is a contact form (see Blair's book \cite{Blair}). 
The vector field $\xi$ is called the \textit{Reeb vector field} of $M$ and it is characterized by $\iota_\xi \eta =1$ and $\iota_\xi d\eta = 0$. 
Here $\iota$ denotes the interior product. 
In analytical mechanics, 
$\xi$ is traditionally called the 
\textit{characteristic vector field} of $M$.

An almost contact metric manifold $M$ is said to 
be {\em normal} if the normality tensor  
\linebreak
$
\displaystyle
S(X,Y)= N_\varphi(X, Y)+2d\eta(X,Y)\xi
$ vanishes, where $N_\varphi$ is the \textit{Nijenhuis torsion} of $\varphi$ defined by
$
\displaystyle
N_\varphi(X, Y) = 
[\varphi X,\varphi Y ] + \varphi^2[X, Y ] 
-\varphi[\varphi X, Y ] -\varphi[X, \varphi Y ],
$
for any $X, Y \in \mathfrak{X}(M)$.

A {\em Sasakian manifold} is defined as a normal contact metric manifold. 
Denoting by $\nabla$ the Levi-Civita connection associated to $g$, 
the Sasakian manifold $(M, \varphi, \xi, \eta, g)$ is characterized by 
\begin{equation*}
\label{Sasaki1}
(\nabla_X \varphi)Y = - g(X, Y)\xi + \eta(Y)X,
\ {\rm \ for\ any\ }\ X,Y\in\mathfrak{X}(M).
\end{equation*}
As a consequence, we have
\begin{equation}\label{Sasaki2}
\nabla_X \xi = \varphi X, \ \forall X\in \mathfrak{X}(M).
\end{equation}

A contact metric structure $(\varphi, \xi, \eta, g)$ is 
called {\em $K$-contact} if $\xi$ is a Killing vector field. 
Due to \eqref{Sasaki2} and the fact that $\varphi$ is skew-symmetric, 
it follows that a Sasakian manifold is $K$-contact. 
The converse is not true in general. 
Yet, a 3-dimensional manifold is 
Sasakian if and only if it is $K$-contact.

A plane section $\Pi$ at $p\in M^{2n+1}$ is called a $\varphi$-section 
if it is invariant under $\varphi_p$. The sectional curvature 
$K(\Pi)$ of a $\varphi$-section is called the 
{\em $\varphi$-sectional curvature} of $M^{2n+1}$ at $p$. 
A Sasakian manifold $(M^{2n+1}, \varphi, \xi, \eta, g)$ 
is said to be a {\em Sasakian space form} if it has constant $\varphi$-sectional curvature.

Take a positive constant $a$ and define a new 
Sasakian structure $(\varphi,\hat{\xi},\hat{\eta},\hat{g})$ on $M$ by
\[
\hat{\xi}:=\frac{1}{a}\xi,\ \
\hat{\eta}:=a\eta, \  \
\hat{g}:=ag+a(a-1)\eta\otimes\eta.
\]
This structure is called a $D$-\textit{homothetic deformation} of 
$(\varphi,\xi,\eta,g)$.
In particular, if $M(c)$ is a Sasakian space form 
of constant $\varphi$-sectional curvature $c$, 
then deforming the structure we obtain also a Sasakian space form $M(\hat{c})$, where 
$\hat{c} = \frac{c+3}{a}-3$. For every value of $c$ there exists Sasakian space forms, as follows: 
the elliptic Sasakian space forms, also known as the \textit{Berger spheres} if $c>-3$, 
the \textit{Heisenberg space} $\mathbb{R}^{2n+1}(-3)$, if $c=-3$, and $B^{2n}\times\mathbb{R}$ when $c<-3$. 
See also \cite[Theorem 7.15]{Blair}.
Note that the case $c>-3$ includes the standard unit sphere $\mathbb{S}^{2n+1}(1)$.

\begin{example}\label{Example2.1}{\rm
Let us identify the complex ball $B^2$ of curvature $-c^2\>(c>0)$ 
with the upper half plane 
\[
\mathbb{H}^{2}(-c^2)=\{(x,y)\in\mathbb{R}^2
\ \vert \ y>0\}
\]
equipped with the Poincar{\'e} metric 
$\bar{g}=(dx^2+dy^2)/(c^2y^2)$ of constant curvature $-c^2$. 
Then we have a global orthonormal frame field
\[
cy\frac{\partial}{\partial x},
\ \
cy\frac{\partial}{\partial y}.
\]
The standard complex structure $J$ of $\mathbb{H}^2(-4)$ is defined by 
\[
J\frac{\partial}{\partial x}
=
\frac{\partial}{\partial y},
\ \ 
J
\frac{\partial}{\partial y}
=-
\frac{\partial}{\partial x}.
\]
Then $\mathbb{H}^2(-c^2)=(\mathbb{H}^2(-c^2),J)$ 
is a K{\"a}hler manifold.

The K{\"a}hler form $\bar{\Omega}$ of 
$\mathbb{H}^2(-c^2)$ is defined by
\[
\bar{\Omega}(X,Y)=\bar{g}(JX,Y).
\]

Define the one-form $\omega$ on $\mathbb{H}^{2}(-c^2)$ by 
$\omega=2dx/(c^2y)$ then the K{\"a}hler form of 
$\mathbb{H}^{2}(-c^2)$ is 
$d\omega$. 


On the product manifold 
$\mathbb{H}^{2}(-c^2)\times\mathbb{R}$, we equip 
the contact metric structure $(\varphi,\xi,\eta,g)$ by
\[
\eta=dt+\pi^{*}\omega=dt+\frac{2dx}{c^2y},\ \ 
g=\pi^{*}g_B+\eta\otimes\eta
=\frac{dx^2+dy^2}{c^2y^2}+
\left(
dt+\frac{2dx}{c^2y}
\right)^2,
\ \
\xi=\frac{\partial}{\partial t},
\]
\[
\varphi \frac{\partial}{\partial x}=
\frac{\partial}{\partial y},\ \ 
\varphi \frac{\partial}{\partial y}=
-\frac{\partial}{\partial x}
+\frac{2}{c^2y}\frac{\partial}{\partial t},\ \
\varphi \frac{\partial}{\partial t}=0.
\]
Then the resulting 
Sasakian manifold is a simply connected 
Sasakian space form of constant 
$\varphi$-sectional curvature $-c^2-3$. 

Note that this Sasakian space form is the universal covering 
of $\mathrm{SL}_2\mathbb{R}$.
} 
\end{example}

\begin{remark}\label{Remark1}
{\rm
On the product manifold $\mathbb{H}^{2}
(-c^2)\times\mathbb{R}$,
we may consider the 
following one-parameter family of Riemannian metrics:
\[
g_{\nu}:=\frac{dx^2+dy^2}{c^2\>y^2}+
\left(dt+\frac{\nu\>dx}{c^2\>y}\right)^2,\ \ 
\nu\in\mathbb{R},\ \ \nu\geq 0.
\]
For $\nu=2$, we recover the 
metric of the Sasakian space form of constant 
$\varphi$-sectional 
curvature $-c^2-3$. On the other hand, for 
$\nu=0$, we obtain the \emph{Riemannian product} 
$\mathbb{H}^{2}(-c^2)\times\mathbb{E}^1$.
Moreover, when $\nu=c^2$, we obtain the \emph{Sasaki-lift metric} 
of the universal covering of the 
unit tangent sphere bundle $\mathrm{U}\mathbb{H}^{2}(-c^2)$ 
of $\mathbb{H}^{2}(-c^2)$.
}
\end{remark}

\subsection{Magnetic curves in Sasakian manifolds}

Let $(M, \varphi, \xi, \eta, g)$ be a contact metric manifold and let $\Omega$ be the fundamental 2-form defined by \eqref{2form}. 
Since $\Omega=d\eta$ on a contact metric manifold, $\Omega$ is a closed 2-form, thus 
we can define a magnetic field on $M$ by
\begin{equation*}
F_q(X,Y) = q\Omega(X,Y), 
\end{equation*}
where $X,Y\in\mathfrak{X}(M)$ and $q$ is a real constant.
We call $F_q$ the {\em contact magnetic field} with the {\em strength} $q$.
Notice that if $q=0$, then the contact magnetic field vanishes identically and the magnetic curves are the geodesics of $M$. 
In the sequel we assume $q\neq 0$.

The {\em Lorentz force} $\phi_q$ associated to the contact magnetic field $F_q$ may be easily determined combining 
\eqref{2form} and \eqref{eqLforce}, namely
\begin{equation*}
\phi_q = q\varphi, 
\end{equation*} 
where $\varphi$ is the field of endomorphisms of the contact metric structure. 

In this setting, the Lorentz equation \eqref{eqL} can be written as
\begin{equation}
\label{eqLcontact}
\nabla_{\gamma^\prime}\gamma^\prime= q\varphi\gamma^\prime,
\end{equation}
where $\gamma:I\subseteq\mathbb{R}\to M^{2n+1}$ is a smooth curve parametrized by arc length.
The solutions of \eqref{eqLcontact} are called {\em normal magnetic curves} or {\em trajectories} for $F_q$.

%

\subsection{Contact magnetic curves in $3$-dimensional Sasakian manifolds}
Now we assume that $M$ is a $3$-dimensional Sasakian manifold.
A curve $\gamma(u)$, parametrized by arclength, is said to be 
\textit{slant} if it makes constant angle with the Reeb vector flow,
that is the \emph{contact angle} $\sigma(u)$, defined by 
$\cos\sigma(u)=g(\gamma^{\prime}(u),\xi_{\gamma(u)})$, is constant along 
$\gamma$. 

\begin{proposition}[\cite{DIMN1}, \cite{IL}]
Let $M$ be a $3$-dimensional Sasakian manifold. 
Then a contact magnetic curve $\gamma(u)$, parametrized by arclength, is a slant helix with first curvature 
$\kappa_1=|q|\sin\sigma$ and second curvature 
$\kappa_2=|q\cos\sigma-1|$. 
The principal normal $N$ and binormal $B$ are given by 
\[
N=\frac{\varepsilon}{|\sin\sigma|}\varphi\gamma^{\prime},
\ \ 
B=\frac{\varepsilon}{|\sin\sigma|}(\xi-\cos\sigma\>\gamma^{\prime}).
\]
Here $\varepsilon$ is the signature of $q$.
\end{proposition}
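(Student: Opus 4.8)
The plan is to differentiate along $\gamma$, using only the two structural identities of a $3$-dimensional Sasakian manifold recalled above — $\nabla_X\xi=\varphi X$ from \eqref{Sasaki2} and $(\nabla_X\varphi)Y=-g(X,Y)\xi+\eta(Y)X$ — together with the Lorentz equation $\nabla_{\gamma'}\gamma'=q\varphi\gamma'$ and the compatibility of $g$ with $(\varphi,\xi,\eta)$. First I would establish that $\gamma$ is slant: writing $\cos\sigma(u)=g(\gamma'(u),\xi)$ (so $\sigma\in[0,\pi]$ and $\sin\sigma\geq 0$) and differentiating, $\frac{d}{du}g(\gamma',\xi)=g(\nabla_{\gamma'}\gamma',\xi)+g(\gamma',\nabla_{\gamma'}\xi)=q\,g(\varphi\gamma',\xi)+g(\gamma',\varphi\gamma')$, and both terms vanish, the first because $\eta\circ\varphi=0$ and the second because $\varphi$ is skew-symmetric; hence $\sigma$ is constant. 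If $\sin\sigma=0$ then $\gamma'=\pm\xi$ and the Lorentz equation gives $\nabla_{\gamma'}\gamma'=q\varphi(\pm\xi)=0$, so $\gamma$ is a geodesic, in agreement with $\kappa_1=|q|\sin\sigma=0$; from now on I assume $\sin\sigma>0$.

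Next I would build the Frenet frame. Since $\gamma$ has unit speed, $\kappa_1=|\nabla_{\gamma'}\gamma'|=|q|\,|\varphi\gamma'|$, and compatibility gives $|\varphi\gamma'|^2=g(\gamma',\gamma')-\eta(\gamma')^2=1-\cos^2\sigma=\sin^2\sigma$, so $\kappa_1=|q|\sin\sigma$ and $N=\kappa_1^{-1}\nabla_{\gamma'}\gamma'=\frac{\varepsilon}{\sin\sigma}\varphi\gamma'$ with $\varepsilon=\operatorname{sign}q$. For the binormal I would propose $B=\frac{\varepsilon}{\sin\sigma}(\xi-\cos\sigma\,\gamma')$ and verify, again using compatibility and $\varphi\xi=0$, that $g(B,B)=1$, $g(B,\gamma')=0$ and $g(B,N)=0$, so that $\{\gamma',N,B\}$ is an orthonormal frame along $\gamma$ (oriented by this choice of sign).

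To obtain the second curvature I would read it off from the Frenet equation $\nabla_{\gamma'}N=-\kappa_1\gamma'+\kappa_2 B$. Compute $\nabla_{\gamma'}N=\frac{\varepsilon}{\sin\sigma}\bigl((\nabla_{\gamma'}\varphi)\gamma'+\varphi\nabla_{\gamma'}\gamma'\bigr)$, where $(\nabla_{\gamma'}\varphi)\gamma'=-\xi+\cos\sigma\,\gamma'$ by the Sasakian identity and $\varphi\nabla_{\gamma'}\gamma'=q\varphi^2\gamma'=q(-\gamma'+\cos\sigma\,\xi)$; adding gives $\nabla_{\gamma'}(\varphi\gamma')=(q\cos\sigma-1)\xi+(\cos\sigma-q)\gamma'$. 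Substituting $\xi=\varepsilon\sin\sigma\,B+\cos\sigma\,\gamma'$ (obtained by inverting the formula for $B$) and simplifying, the $\gamma'$-component collapses to $-q\sin^2\sigma\,\gamma'=-\kappa_1\gamma'$ (since $\varepsilon q=|q|$), and the $B$-component is $(q\cos\sigma-1)B$; hence the signed torsion is $q\cos\sigma-1$ and $\kappa_2=|q\cos\sigma-1|$ after reversing the sign of $B$ if necessary, a one-line check of $\nabla_{\gamma'}B=-\kappa_2 N$ confirming consistency. As $\sigma$ and $q$ are constant, $\kappa_1$ and $\kappa_2$ are constant, so $\gamma$ is a slant helix, as claimed.

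The argument is entirely computational, so I do not expect a genuine obstacle; the only delicate point is the sign and orientation bookkeeping — tracking $\varepsilon=\operatorname{sign}q$, using that $\sigma\in[0,\pi]$ so $\sin\sigma\geq 0$, and choosing the orientation that turns the signed torsion $q\cos\sigma-1$ into the stated $\kappa_2=|q\cos\sigma-1|$ — together with the separate (but immediate) treatment of the degenerate case $\sin\sigma=0$, where the Frenet apparatus breaks down and $\gamma$ reduces to a geodesic integral curve of $\xi$.
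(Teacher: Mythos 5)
Your proof is correct: the slantness computation, the identification $\kappa_1=|q|\sin\sigma$ with $N=\tfrac{\varepsilon}{\sin\sigma}\varphi\gamma'$, the verification that $B$ completes an orthonormal frame, and the Frenet computation of $\nabla_{\gamma'}N$ via $(\nabla_{\gamma'}\varphi)\gamma'=-\xi+\cos\sigma\,\gamma'$ and $\varphi^2\gamma'=-\gamma'+\cos\sigma\,\xi$ all check out, including the sign bookkeeping that yields signed torsion $q\cos\sigma-1$. The paper itself states this proposition without proof (citing \cite{DIMN1} and \cite{IL}), and your argument is precisely the standard derivation used there, so there is nothing further to compare.
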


\section{The special linear group} 
\subsection{Iwasawa decomposition}
Let 
$\mathrm{SL}_{2}\mathbb{R}$ be
the real special linear group of degree $2$:
\[
\mathrm{SL}_{2}\mathbb{R}=\left \{
\left (
\begin{array}{cc}
a & b \\ c & d
\end{array}
\right )
\ \biggr \vert \
a,b,c,d \in \mathbb{R},
\ ad-bc=1
\right \}.
\]
By using the Iwasawa decomposition 
$\mathrm{SL}_2\mathbb{R}=NAK$ of $\mathrm{SL}_2\mathbb{R}$; 
\begin{align*}
N&=\left \{
\left (
\begin{array}{cc}
1 & x \\ 0 & 1 
\end{array}
\right )
\
\biggr \vert \ 
x \in \mathbb{R}
\right \}, \qquad
\mbox{(Nilpotent \ part)}
\\
A&=
\left \{
\left (
\begin{array}{cc}
\sqrt{y} & 0 \\ 0 & 1/\sqrt{y}
\end{array}
\right )
\ \biggr
\vert \
y>0
\right \}, \qquad
\mbox{(Abelian \ part)}
\\
K&=
\left \{
\left (
\begin{array}{cc}
\cos \theta & \sin \theta \\
-\sin \theta & \cos \theta 
\end{array}
\right )
\ \biggr \vert 
\ 0\leq \theta < 2\pi
\right \}=\mathrm{SO}(2), \quad
\mbox{(Maximal \ torus)}
\end{align*}
we can introduce the following
global coordinate system $(x,y,\theta)$
of $\mathrm{SL}_2\mathbb{R}$:
\begin{equation} \label{coord}
(x,y,\theta)\longmapsto
\left (
\begin{array}{cc}
1 & x \\ 0 & 1 
\end{array}
\right )
\left (
\begin{array}{cc}
\sqrt{y} & 0 \\ 0 & 1/\sqrt{y}
\end{array}
\right )
\left (
\begin{array}{cc}
\cos \theta & \sin \theta \\
-\sin \theta & \cos \theta 
\end{array}
\right ).
\end{equation}
The mapping 
\[
\psi:\mathbb{H}^2(-4) \times 
\mathbb{S}^1 \rightarrow \mathrm{SL}_2\mathbb{R};\ \ 
\psi(x,y,\theta):=
\begin{pmatrix} 1 & x \\ 0 & 1 
\end{pmatrix}
\begin{pmatrix}
\sqrt{y} & 0 \\ 0 & 1/\sqrt{y} 
\end{pmatrix} 
\begin{pmatrix}
\cos \theta & \sin \theta \\ 
-\sin \theta & \cos \theta
\end{pmatrix}
\]
is a diffeomorphism onto 
$\mathrm{SL}_2\mathbb{R}$. 
Hereafter, we shall refer $(x,y,\theta)$ as a global
coordinate system of $\mathrm{SL}_2\mathbb{R}$.
Hence $\mathrm{SL}_2\mathbb{R}$ is diffeomorphic to
$\mathbb{R} \times \mathbb{R}^{+}\times\mathbb{S}^1$
and hence diffeomorphic to $\mathbb{R}^{3}\setminus \mathbb{R}$.
Since $\mathbb{R} \times \mathbb{R}^{+}$ is diffeomorphic
to open unit disk $\mathbb{D}$, $\mathrm{SL}_2\mathbb{R}$ is diffeomorphic
to open solid torus $\mathbb{D} \times \mathbb{S}^1$.

\begin{proposition}
The Iwasawa decomposition of an 
element $p=(p_{ij})\in\mathrm{SL}_{2}\mathbb{R}$ is given 
explicitly by $p=n(p)a(p)k(p)$, where 
\[
n(p)=\left(
\begin{array}{cc}
1 & x\\
0 &1
\end{array}
\right), \ \ 
a(p)=\left(
\begin{array}{cc}
\sqrt{y} & 0\\
0 &1/\sqrt{y}
\end{array}
\right), \ \ 
k(p)=\left(
\begin{array}{cc}
\cos\theta & \sin\theta\\
-\sin\theta &\cos\theta
\end{array}
\right)
\]
with 
\[
x=\frac{p_{11}p_{21}+p_{12}p_{22}}
{(p_{21})^2+(p_{22})^2},
\  \
y=\frac{1}{(p_{21})^2+(p_{22})^2},
\ \ 
e^{i\theta}=\frac{p_{22}-ip_{21}}{\sqrt{(p_{21})^2+(p_{22})^2}}.
\]
\end{proposition}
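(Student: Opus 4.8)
The plan is to reduce everything to a short matrix computation, relying on the classical fact that the Iwasawa decomposition $\mathrm{SL}_2\mathbb{R}=NAK$ exists and is unique; equivalently, that the solvable subgroup $NA$ acts simply transitively on the upper half-plane by Möbius transformations, with $K=\mathrm{SO}(2)$ the isotropy subgroup of the point $i$. Thus for every $p\in\mathrm{SL}_2\mathbb{R}$ there are unique $x\in\mathbb{R}$, $y>0$ and $\theta$ (mod $2\pi$) with $p=n(p)a(p)k(p)$, and it only remains to express them through the entries $p_{ij}$.

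First I would pin down $y$ and $\theta$ by inspecting the bottom row. Left multiplication by $n(p)\in N$ leaves the bottom row of a matrix unchanged, while left multiplication by $a(p)\in A$ scales it by $1/\sqrt{y}$; since the bottom row of $k(p)$ is $(-\sin\theta,\cos\theta)$, comparing bottom rows in $p=n(p)a(p)k(p)$ gives
\[
(p_{21},p_{22})=\tfrac{1}{\sqrt{y}}\,(-\sin\theta,\cos\theta).
\]
Taking squared norms yields $p_{21}^2+p_{22}^2=1/y$ (this is nonzero, since $p$ is invertible and so its bottom row cannot vanish), hence $y=1/(p_{21}^2+p_{22}^2)$; and then $\cos\theta+i\sin\theta=\sqrt{y}\,(p_{22}-i\,p_{21})$, which is precisely the stated value of $e^{i\theta}$.

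To obtain $x$ I would use the Möbius action: since $n(p)a(p)\cdot i=x+iy$ while $k(p)$ fixes $i$, we get $p\cdot i=x+iy$. A direct computation gives
\[
p\cdot i=\frac{p_{11}i+p_{12}}{p_{21}i+p_{22}}
=\frac{(p_{11}i+p_{12})(p_{22}-p_{21}i)}{p_{21}^2+p_{22}^2}
=\frac{(p_{11}p_{21}+p_{12}p_{22})+i\,(p_{11}p_{22}-p_{12}p_{21})}{p_{21}^2+p_{22}^2},
\]
and since $p_{11}p_{22}-p_{12}p_{21}=\det p=1$ the real part is $x=(p_{11}p_{21}+p_{12}p_{22})/(p_{21}^2+p_{22}^2)$, while the imaginary part reconfirms the value of $y$. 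Alternatively one can simply multiply out $n(p)a(p)k(p)$ in full and match the first row, which amounts to the same algebra.

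There is no genuine obstacle here; the one point deserving a word of care is the role of the determinant. The identity $ad-bc=1$ is exactly what is used to simplify $p\cdot i$ above, and it is also what guarantees \emph{consistency}: the triple $(x,y,\theta)$ produced from $p_{21},p_{22}$ and $p_{11}$ automatically reproduces the remaining entry $p_{12}$ as well, so that $p=n(p)a(p)k(p)$ holds on the nose. (Concretely, substituting the formulas into $-y\,p_{21}+x\,p_{22}$ and clearing denominators reduces the required equality to $p_{11}p_{22}-p_{12}p_{21}=1$.) The other thing to record is simply that $p_{21}^2+p_{22}^2>0$, so all the displayed fractions make sense.
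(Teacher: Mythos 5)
Your proposal is correct, and it is the standard verification: compare bottom rows to read off $y$ and $\theta$, then use $p\cdot i = n(p)a(p)\cdot i = x+iy$ (with $K$ the stabilizer of $i$ and $\det p=1$) to get $x$. The paper states this proposition without proof, and your argument is exactly the computation it implicitly relies on, consistent with its use of the Möbius action and the projection $\pi(x,y,\theta)=(x,y)$.
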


\subsection{}
As is well known, the Lie algebra $\mathfrak{sl}_2\mathbb{R}$ 
of $\mathrm{SL}_2\mathbb{R}$ is given explicitly by
\[
\mathfrak{sl}_2\mathbb{R}
=\left\{
X \in \mathrm{M}_{2}\mathbb{R}\ \biggr \vert
\ \mathrm{tr}\> X=0 \right \}. 
\]
We take the following basis of $\mathfrak{sl}_2\mathbb{R}$:
\[
E=\begin{pmatrix} 0 & 1 \\ 0 & 0 \end{pmatrix},\ \ 
F=\begin{pmatrix} 0 & 0 \\ 1 & 0 \end{pmatrix},\ \ 
H=\begin{pmatrix} 1 & 0 \\ 0 & -1 \end{pmatrix}.
\]
This basis satisfies the commutation relations:
\[
[E,F]=H,\ \ 
[F,H]=2F,\ \
[H,E]=2E.
\]

The Lie algebra $\mathfrak{n}$, 
$\mathfrak{a}$ and $\mathfrak{k}$ 
of closed groups $N$, $A$ and $K$ are given by 
\[
\mathfrak{n}=\mathbb{R}E,\ \ 
\mathfrak{a}=\mathbb{R}H,\ \ 
\mathfrak{k}=\mathbb{R}(E-F).
\]

The left invariant vector fields obtained by left translating $E$, $F$ and $H$ 
are denoted by the same letter $E$, $F$ and $G$, 
respectively. These left invariant vector fields are given by
\begin{align*}
E&=
y\cos(2\theta)\frac{\partial}{\partial x}
+y\sin(2\theta)\frac{\partial}{\partial y}
+\sin^2\theta\frac{\partial}{\partial \theta},
\\
F&=
y\cos(2\theta)\frac{\partial}{\partial x}
+y\sin(2\theta)\frac{\partial}{\partial y}
-\cos^2\theta\frac{\partial}{\partial \theta},
\\
H&=
-2y\sin(2\theta)\frac{\partial}{\partial x}
+2y\cos(2\theta)\frac{\partial}{\partial y}
+\sin(2\theta)\frac{\partial}{\partial \theta}.
\end{align*}
One notices that  
\[
\frac{\partial}{\partial \theta}=E-F
\]
is left invariant. 
On the other hand we have
\begin{align*}
E+F
=&\cos(2\theta)
\left(
2y\frac{\partial}{\partial x}
-\frac{\partial}{\partial \theta}
\right)
+\sin(2\theta)\left(
2y\frac{\partial}{\partial y}
\right),
\\
H=&-
\sin(2\theta)
\left(
2y\frac{\partial}{\partial x}
-\frac{\partial}{\partial \theta}
\right)+\cos(2\theta)\left(
2y\frac{\partial}{\partial y}
\right).
\end{align*}
Here we introduce a frame field 
$\{\epsilon_1,\epsilon_2,\epsilon_3\}$ by 
\begin{equation}\label{framefield}
\epsilon_1=2y\frac{\partial}{\partial x}
-\frac{\partial}{\partial \theta},\  \
\epsilon_2=2y\frac{\partial}{\partial y},\  \
\epsilon_3=\frac{\partial}{\partial \theta}.
\end{equation}
This frame field is related to 
$\{E+F, H,E-F\}$ by
\[
(E+F,H,E-F)=(\epsilon_1,\epsilon_2,\epsilon_3)
\left(
\begin{array}{ccc}
\cos(2\theta) & -\sin(2\theta) &0\\
\sin(2\theta) &\cos(2\theta) &0\\
0 & 0 & 1
\end{array}
\right).
\]

\begin{remark}{\rm
The Lie algebra $\mathfrak{h}$ is the 
Cartan subalgebra of $\mathfrak{sl}_{2}\mathbb{R}$. 
Moreover $\mathfrak{n}$ and $\mathbb{R}F$ are 
root spaces with respect to $\mathfrak{h}$.
The decomposition 
$\mathfrak{sl}_{2}\mathbb{R}=\mathfrak{h}
\oplus \mathfrak{n}\oplus\mathbb{R}F$ is the root 
space decomposition (or Gauss decompostion) of 
$\mathfrak{sl}_{2}\mathbb{R}$.
}
\end{remark}

\subsection{}
The special linear group $\mathrm{SL}_2\mathbb{R}$ 
acts transitively and
isometrically on the upper half plane:
\[
\mathbb{H}^2(-4)=
\left(
\{(x,y)\in \mathbb{R}^2 \ | \ y>0\},
\frac{dx^2+dy^2}{4y^2}
\right)
\]
of constant curvature $-4$ by the 
linear fractional transformation as
\[
\left(
\begin{array}{cc}
a & b\\
c & d
\end{array}
\right)\cdot z=\frac{az+b}{cz+d}.
\]
Here we regard a point $(x,y)\in\mathbb{H}^2(-4)$ as a complex 
number $z=x+yi$.

\begin{remark}
{\rm A linear fractional transformation 
determined by a matrix 
$\displaystyle \left(
\begin{array}{cc}
a & b\\
c & d
\end{array}
\right)\not=\pm~\mathrm{Id}$ with $ad-bc=1$ is said to be 
\begin{itemize}
\item \emph{elliptic} if $|a+d|<2$;
\item \emph{parabolic} if $|a+d|=2$;
\item \emph{hyperbolic} if $|a+d|>2$.
\end{itemize}
}
\end{remark}

The isotropy subgroup of $\mathrm{SL}_2\mathbb{R}$ at $i=(0,1)$ is 
the rotation group $\mathrm{SO}(2)$.
The natural projection $\pi:(\mathrm{SL}_2\mathbb{R},g)
\to \mathrm{SL}_2\mathbb{R}/\mathrm{SO}(2)=\mathbb{H}^2(-4)$
is given explicitly by
\[
\pi(x,y,\theta)=(x,y) \in \mathbb{H}^2(-4)
\]
in terms of the global coordinate
system \eqref{coord}.

The tangent space $T_{i}\mathbb{H}^{2}(-4)$ at 
the origin $i=(0,1)$ is identified with the 
vector subspace $\mathfrak{m}$ defined by
\[
\mathfrak{m}=\{X\in\mathfrak{sl}_{2}\mathbb{R}\>|\>
{}^t\!X=X\}.
\]
The Lie algebra $\mathfrak{g}=\mathfrak{sl}_2\mathbb{R}$ has 
the orthogonal splitting $\mathfrak{g}=\mathfrak{k}
\oplus\mathfrak{m}$.
This splitting can be carried out explicitly as
\[
X=X_{\mathfrak k}+X_{\mathfrak m}, \ \ 
X_{\mathfrak k}=\frac{1}{2}(X-{}^t\!X),
\ \ 
X_{\mathfrak m}=\frac{1}{2}(X+{}^t\!X).
\]

\subsection{}

Define a one-parameter family of inner products
$\{\langle \cdot, \cdot\rangle_{\lambda} \  \vert \  \lambda 
\in \mathbb{R}^*\}$ on $\mathfrak{sl}_2\mathbb{R}$ 
so that 
$\{E,\ F,\ H/\lambda\}$
is orthonormal with respect to 
$\langle \cdot,\cdot \rangle_{\lambda}$.
By left-translating these inner products,
we equip a one parameter family 
$\{g_\lambda\}$
of left invariant Riemannian metrics 
on $\mathrm{SL}_2\mathbb{R}$. This family $\{g_\lambda\}_{\lambda\in\mathbb{R}^{*}}$
is different from the family $\{g_\nu\}_{\nu\geq 0}$ in Remark~\ref{Remark1}.
With respect to the global coordinate system $(x,y,\theta)$,
each $g_\lambda$ is expressed as
\[
\frac{1}{2y^2}
\begin{pmatrix}
2(\cos^{4}
\theta+\sin^{4}
\theta+\lambda^{2}
\sin^{2}\theta\cos^{2}\theta) &
(1-\lambda^{2}/2) \sin 2\theta \cos 2\theta &
2y \\
(1-\lambda^{2}/2) \sin 2\theta \cos 2\theta &
\sin^{2} 2\theta +(\lambda^{2}/2)\cos^{2} 2\theta &
0 \\
2y & 0 & 4y^2
\end{pmatrix}.
\]
In particular the $x$ and the $y$-coordinate curves are orthogonal
if and only if $\lambda=\pm\sqrt{2}$. 
\par
The left invariant metric $g_{\sqrt{2}}$ is given by
\[
g_{\sqrt{2}}=2\left\{
\frac{dx^2+dy^2}{4y^2}+
\left(d\theta+\frac{dx}{2y}
\right)^2
\right \}.
\]
Here we would like to remark that  one-forms 
\[
\frac{dx}{2y},\ \frac{dy}{2y},\ d\theta+\frac{dx}{2y}
\]
are globally defined on $\mathrm{SL}_2\mathbb{R}$.

\subsection{}
For simplicity we shall restrict our attention 
to $g_{\sqrt{2}}$.
In addition, to adapt our computations 
to Sasakian geometry, we use the following homothetical
change of $g_{\sqrt{2}}$.
\[
g:=\frac{1}{2}g_{\sqrt{2}}=
\frac{dx^2+dy^2}{4y^2}+
\left(
d\theta+\frac{dx}{2y}
\right)^2.
\]
It is easy to see that the projection 
$\pi:(\mathrm{SL}_2\mathbb{R},g)
\rightarrow \mathrm{SL}_2\mathbb{R}/\mathrm{SO}(2)
=\mathbb{H}^2(-4)$ is a Riemannian submersion 
with totally geodesic fibres. 
This submersion 
$\pi:(\mathrm{SL}_2\mathbb{R},g) \to \mathbb{H}^2(-4)$ 
is called the
{\em hyperbolic Hopf fibering} of $\mathbb{H}^2(-4)$.

\subsection{}
On the Lie algebra 
$\mathfrak{g}=\mathfrak{sl}_2\mathbb{R}$, the inner product 
$\langle\cdot,\cdot\rangle$ at the identity 
induced from 
$g$ is written as
\[
\langle X,Y \rangle=
\frac{1}{2} \ \mathrm{tr}\>
({}^{t}\!XY ),\ \ 
X,Y \in \mathfrak{sl}_{2}\mathbb{R}.
\]
By using this formula,
we can see that the metric $g$
is not only invariant by 
$\mathrm{SL}_2\mathbb{R}$-left
translation
but also right translations by 
$\mathrm{SO}(2)$.
Hence the Lie group $\mathrm{SL}_2\mathbb{R}
\times\mathrm{SO}(2)$ 
with multiplication:
\[
(a,b)(a^{\prime},b^{\prime})=(aa^{\prime},bb^{\prime})
\]
acts isometrically on $\mathrm{SL}_2\mathbb{R}$ via the 
action:
\[
(\mathrm{SL}_2\mathbb{R}
\times\mathrm{SO}(2))\times\mathrm{SL}_2\mathbb{R}
\rightarrow \mathrm{SL}_2\mathbb{R};\ \ \ 
(a,b) \cdot X=aXb^{-1}.
\] 
Furthermore, this action of $\mathrm{SL}_2\mathbb{R}
\times\mathrm{SO}(2)$ on $\mathrm{SL}_2\mathbb{R}$ is transitive,
hence $\mathrm{SL}_2\mathbb{R}$ is a Riemannian homogeneous space
of $\mathrm{SL}_2\mathbb{R}
\times\mathrm{SO}(2)$. 
The isotropy subgroup of 
$\mathrm{SL}_2\mathbb{R}
\times\mathrm{SO}(2)$ at the identity matrix  $\mathrm{Id}$ is 
the diagonal subgroup
\[
\Delta K=\{(k,k)\>\vert\> k\in K\}\cong K
\]
of $K\times K$. 
The coset space $(\mathrm{SL}_2\mathbb{R}
\times\mathrm{SO}(2))/\mathrm{SO}(2)$
is a naturally reductive homogeneous
space. 

The tangent space $T_{\mathrm{Id}}\mathrm{SL}_2\mathbb{R}$ is the 
Lie algebra $\mathfrak{g}=\mathfrak{sl}_2\mathbb{R}$. This tangent space is 
identified with the vector subspace $\mathfrak{p}$ defined by
\[
\mathfrak{p}=\{(V+W,2W)\>\vert\>V\in\mathfrak{m},
\>
W\in\mathfrak{k}\}.
\]
The Lie algebra of the product group 
$G\times K$ is $\mathfrak{g}\oplus\mathfrak{k}$.
On the other hand the Lie algebra of $\Delta K$ is
\[
\Delta\mathfrak{k}=\{(W,W)\>\vert\>W\in\mathfrak{k}\}\cong 
\mathfrak{k}.
\]
The Lie algebra $\mathfrak{g}\oplus
\mathfrak{k}$ is 
decomposed as
$\mathfrak{g}\oplus\mathfrak{k}=\Delta(\mathfrak{k})
\oplus\mathfrak{p}$.

Every $(X,Y)\in\mathfrak{g}\oplus
\mathfrak{k}$ is decomposed as
\[
(X,Y)=(2X_{\mathfrak k}-Y,2X_{\mathfrak k}-Y)
+(X_{\mathfrak m}+(Y-X_{\mathfrak k}),
2(Y-X_{\mathfrak k})).
\]

\subsection{}
We choose an orthonormal basis of $(\mathfrak{sl}_2\mathbb{R},\langle
\cdot,\cdot \rangle)$ by
\[
E_{1}=\sqrt{2}E,\ E_{2}=\sqrt{2}F,\ E_3=H.
\]
Then the commutation relations are 
\[
[E_1,E_2]=2E_3,\ [E_2,E_3]=2E_2,\ [E_3,E_1]=2E_1.
\]
Let us denote the Levi-Civita connection of
$(\mathrm{SL}_2\mathbb{R},g)$ by $\nabla$.
By using the Koszul formula:
\[
2\langle \nabla_{X}Y,Z\rangle=
-\langle X,[Y,Z]\rangle
+\langle Y,[Z,X]\rangle
+\langle Z,[X,Y]\rangle,\ \ 
X,Y,Z \in {\frak g},
\]
one can obtain the following formulas:
\begin{align*}
& \nabla_{E_1}E_{1}=2E_{3},\ \nabla_{E_1}E_{2}=E_{3},\ 
\nabla_{E_1}E_{3}=-2E_{1}-E_{2}, 
\\
& \nabla_{E_2}E_{1}=-E_{3},\ \nabla_{E_2}E_{2}=-2E_{3},\ 
\nabla_{E_2}E_{3}=E_{1}+2E_{2}, 
 \\
&
\nabla_{E_3}E_{1}=-E_{2},\ \nabla_{E_3}E_{2}=E_{1},\ 
\nabla_{E_3}E_{3}=0. 
\end{align*}

The bi-invariance obstruction ${\mathsf{U}}$ defined by
\[
2\langle {\mathsf{U}}(X,Y),Z \rangle=
-\langle X,[Y,Z]\rangle+\langle Y,[Z,X] \rangle,
\ X,Y \in {\frak g}
\]
is given by
\begin{align}\label{U-tensor}
&{\mathsf{U}}(E_1,E_1)=2E_3,\ {\mathsf{U}}(E_1,E_2)=0,\ {\mathsf{U}}(E_1,E_3)=-E_1-E_2,
\\
&{\mathsf{U}}(E_2,E_2)=-2E_3,\ {\mathsf{U}}(E_2,E_3)=E_1+E_2,\ {\mathsf{U}}(E_3,E_3)=0.
\nonumber
\end{align}
The Levi-Civita connection is 
rewritten as 
\begin{equation}\label{LC}
\nabla_{X}Y=\frac{1}{2}[X,Y]+{\mathsf{U}}(X,Y),
\ \  X,Y 
\in \mathfrak{g}.
\end{equation}

\subsection{}
We take the following orthonormal coframe field
of $\mathrm{SL}_2\mathbb{R}$:
\begin{equation*}
\omega^1=\frac{dx}{2y},\ \
\omega^{2}=\frac{dy}{2y}, \ \
\omega^{3}=d\theta+\frac{dx}{2y}.
\end{equation*}
The dual frame field 
of $\{\omega^1,\omega^2,\omega^3\}$ is the frame field 
$\{\epsilon_1,\epsilon_2,\epsilon_3\}$ introduced by \eqref{framefield}.
Note that this orthonormal frame 
field is \textit{not} left invariant with respect to 
the Lie group structure.

The Levi-Civita connection $\nabla$ 
of $g$ is given by the following formulas:
\[
\nabla_{\epsilon_1}\epsilon_1=2\epsilon_2,\ \
\nabla_{\epsilon_1}\epsilon_2=-2\epsilon_1-\epsilon_3
,\ \
\nabla_{\epsilon_1}\epsilon_3=\epsilon_2,
\]
\begin{equation*}
\nabla_{\epsilon_2}\epsilon_1=\epsilon_3,\ \
\nabla_{\epsilon_2}\epsilon_2=0
,\ \
\nabla_{\epsilon_2}\epsilon_3=-\epsilon_1,
\end{equation*}
\[
\nabla_{\epsilon_3}\epsilon_1=\epsilon_2,\ \
\nabla_{\epsilon_3}\epsilon_2=- \epsilon_1
,\ \
\nabla_{\epsilon_3}\epsilon_3=0.
\]
The commutation relations
of the basis are given by
\begin{equation*}
[\epsilon_{1},\epsilon_{2}]=
-2\epsilon_{1}-2\epsilon_{3},\ \
[\epsilon_{1},\epsilon_{3}]=0,\ \
[\epsilon_{2},\epsilon_{3}]=0.
\end{equation*}
The Riemannian curvature tensor
$R$ of the metric $g$
defined by
\[
R(X,Y):=\nabla_{X}\nabla_{Y}-
\nabla_{Y}\nabla_{X}-
\nabla_{[X,Y]},\ \ 
X,Y\in \mathfrak{X}(\mathrm{SL}_2\mathbb{R})
\]
is 
described by the
following formulas:
\begin{equation*}
\begin{array}{cc}
  {R}(\epsilon_{1},\epsilon_{2})
  \epsilon_{1}=7\epsilon_{2}, &
 {R}(\epsilon_{1},\epsilon_{2})\epsilon_{2}
 =-7\epsilon_{1}, \\
 {R}(\epsilon_{1},\epsilon_{3})\epsilon_{1}
 =-\epsilon_{3}, &
 {R}(\epsilon_{1},\epsilon_{3})
 \epsilon_{3}=\epsilon_{1}, \\
 {R}(\epsilon_{2},\epsilon_{3})
 \epsilon_{2}=-\epsilon_{3}, &
{R}(\epsilon_{2},\epsilon_{3})
\epsilon_{3}=\epsilon_{2}.
\end{array}
\end{equation*}
The other significant components are zero.
\subsection{}
The one-form $\eta=d \theta+dx/(2y)$ 
is a {\em contact form} on
$\mathrm{SL}_2\mathbb{R}$, {\em i.e.},
$d \eta \wedge \eta \not=0$. The Reeb vector field 
of $\eta$ is $\xi=\epsilon_3$.
The contact distribution 
determined by $\eta$ 
coincides the 
horizontal distribution 
of the Riemannian 
submersion $\pi:G\to\mathbb{H}^2(-4)$.
\begin{remark}
{\rm
Under the identification 
$\mathfrak{k}\cong\mathbb{R}$, 
the contact form $\eta$ is regarded as a 
connection form of the principal 
circle bundle $\mathrm{SL}_2\mathbb{R}
\to\mathbb{H}^2(-4)$.
}
\end{remark}

Let us define an endomorphism field 
$\varphi$ by
\[
\varphi\> \epsilon_1=\epsilon_2,\ 
\varphi\> \epsilon_2=-\epsilon_1,\ 
\varphi\> \epsilon_3=0.
\]
Then
$(\varphi,\xi,\eta,g)$ satisfies the 
following relations:
\[
\varphi^2=-I+\eta \otimes \xi,
\ \
d \eta(X,Y)=g(\varphi{X},Y),
\]
\[
g(\varphi{X},\varphi{Y})=g(X,Y)-\eta(X)\eta(Y),
\]
\[
\nabla_{X}\xi=\varphi{X},
\]
\[
(\nabla_{X}\varphi)Y=-g(X,Y)\xi+\eta(Y)X
\]
for all $X$, $Y \in \mathfrak{X}(\mathrm{SL}_2\mathbb{R})$.
Thus the
structure $(\varphi,\xi,\eta)$ is 
an almost contact structure 
compatible to the metric $g$. 
In other words, structure
$(\varphi,\xi,\eta,g) $ is an
almost contact metric structure
associated to the the contact manifold
$(\mathrm{SL}_2\mathbb{R},\eta)$ \cite{IKOS3}.
Since all the structure tensor fields 
are left invariant, the resulting almost contact 
metric manifold
$(\mathrm{SL}_2\mathbb{R},\varphi,\xi,\eta,g)$ is a homogeneous
Sasakian manifold of constant $\varphi$-sectional curvature $-7$. 
The structure $(\varphi,\xi,\eta,g)$ is called the
{\em canonical Sasakian structure} 
of $\mathrm{SL}_2\mathbb{R}$. 

\begin{remark}
{\rm 
The Riemannian curvature tensor
$R$ of $(\mathrm{SL}_2\mathbb{R},g)$
is given 
explicitly by 
\begin{align*}
R(X,Y)Z  = & -
g(Y,Z)X+g(Z,X)Y \\
&  -2\>\{
\eta(Z)\eta(X)Y
-\eta(Y)\eta(Z)X \\
&+g(Z,X)\eta(Y)\xi
-g(Y,Z)\eta(X)\xi \\
	&-g(Y,\varphi{Z})\varphi{X}-g(Z,\varphi{X})\varphi{Y}+
2g(X,\varphi{Y})\varphi{Z}
\>\}
\end{align*}
in terms of the canonical Sasakian structure.

For more informations on 
the canonical Sasakian structure  
of $\mathrm{SL}_2\mathbb{R}$, we refer to \cite{IKOS3}.
}
\end{remark}
\section{Hopf tubes}
\subsection{Fundamental equations of Hopf tubes} \
Let $\pi:(SL_2\mathbb{R},g)\rightarrow\mathbb({H}^2(-4),\bar{g})$ be the hyperbolic Hopf 
fibering. Consider a regular curve $\beta:{\mathbb{R}}\longrightarrow 
{\mathbb{H}}^2(-4)$,
$u\mapsto \beta(u)$. As usual, 
$\beta$ will be parametrized by the arc length and 
let $\hat\beta$ be a horizontal
lift of $\beta$. This means that 
$\pi(\hat\beta(u))=\beta(u)$ for all $u\in{\mathbb{R}}$ and
$\langle\hat\beta(u)^{-1}\hat\beta'(u),\epsilon_3\rangle=0$.
If we represent $\beta(u)$ as 
$\beta(u)=(x(u)),y(u))$, then the horizontal lift
$\hat{\beta}(u)$ is given by 
$\hat{\beta}(u)=(x(u),y(u),\theta(u))$ whose third coordinate
$\theta(u)$ is determined by the ordinary differential equation
\[
\frac{d\theta}{du}=-\frac{1}{y(u)}\frac{dx}{du}
\]
with initial condition $\theta(0)=\theta_0$. 

The complete lift of $\beta$, namely $\pi^{-1}(\beta)$ is a flat surface 
in ${\mathrm{SL}}_2\mathbb{R}$ and it is usually called 
the \emph{Hopf tube} over $\beta$. 

Denote $\pi^{-1}(\beta)$ by $H_\beta$. 
The Hopf tube $H_{\beta}$ is represented as an 
immersion:
\[
F:{\mathbb{R}}\times{\mathbb{R}}\longrightarrow {\mathrm{SL}}_2\mathbb{R},\quad
(t,u)\longmapsto F(t,u)=\hat\beta(u) k(t),
\]
where 
\[
k(t)=\left(
\begin{array}{cc}
\cos t & \sin t\\
-\sin t &\cos t
\end{array}
\right).
\]
In other words,
\[
F(t,u)=(x(u),y(u),\theta(u)+t).
\]
The derivatives are 
\[
F_{u}=\frac{x^{\prime}}{2y}\epsilon_{1}+
\frac{y^{\prime}}{2y}\epsilon_{2},\ \ 
F_{t}=\epsilon_{3}.
\]
Hence the induced metric $g_{H_\beta}$ is computed as
\begin{equation*}
g_{H_\beta}=dt^2+du^2.
\end{equation*}
Let us compute the second fundamental form of $H_{\beta}$.
Let $\{\overline{T}(u),\overline{N}(u)\}$ the 
Frenet frame field of $\beta(u)$. 
As usual
\[
\overline{T}(u)=\beta^{\prime}(u),
\ \ 
\overline{N}(u)=J\overline{T}(u).
\]
The signed curvature $\kappa_{\beta}$ is defined by
\[
\overline{\nabla}_{\beta^\prime}\overline{T}=\kappa_{\beta}\overline{N}.
\]
Let us denote by $\hat{T}$ the horizontal lift 
of $\overline{T}$. Then $T$ tangents to $H_{\beta}$.
Moreover $\{\hat{T},\xi\}$ is an orthonormal frame field 
of $H_{\beta}$. The horizontal lift 
$\hat{N}$ of $N$ is a unit normal vector field 
of $H_{\beta}$. 
One can check that $\hat{N}=\varphi\hat{T}$. 
The Gauss formula of $H_{\beta}$ is given by \cite[\S 1.3]{I2004CM}:
\[
\nabla_{\hat{T}}\hat{T}=(\kappa_{\beta}\circ \pi)\>\hat{N},
\ \
\nabla_{\hat{T}}\xi=\nabla_{\xi}\hat{T}=\hat{N} ,\ \ 
\nabla_{\xi}\xi=0.
\]
These formula imply that $H_{\beta}$ is flat and 
the second fundamental form $h$ derived from $\hat{N}$ is 
\[
h(\hat{T},\hat{T})=\kappa_{\beta}\circ\pi,\ \ 
h(\hat{T},\xi)=1,\ \ 
h(\xi,\xi)=0.
\]
Thus, the mean curvature of $H_{\beta}$ is 
$(\kappa_{\beta}\circ\pi)/2$.

Hence we have proved the following fact.
\begin{proposition}
If $\beta$ is a curve on ${\mathbb{H}}^2(-4)$ 
of length $L$, then the corresponding Hopf tube $H_\beta$
is isometric to ${\mathbb{S}}^1(1)\times[0,L]$, where ${\mathbb{S}}^1(1)$ is the unit circle endowed with the metric
$dt^2$. Moreover, its mean curvature in ${\mathrm{SL}}_2\mathbb{R}$ is $(\kappa_\beta\circ \pi)/2$, where
$\kappa_\beta$ is the signed curvature of $\beta$ in ${\mathbb{H}}^2(-4)$.
\end{proposition}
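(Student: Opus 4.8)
The plan is to verify everything by direct computation in the adapted frame, reusing the Gauss-type formulas that were already derived just above. Recall that the Hopf tube $H_\beta$ over an arc-length curve $\beta$ on $\mathbb{H}^2(-4)$ is parametrized by $F(t,u)=\hat\beta(u)k(t)=(x(u),y(u),\theta(u)+t)$, and that the induced metric was computed to be $g_{H_\beta}=dt^2+du^2$. So the first half of the statement is essentially immediate: the parameter domain is $\mathbb{R}\times[0,L]$ once we restrict $u$ to the interval on which $\beta$ is defined (length $L$), and since $F(t,u)$ depends on $t$ only through $\theta(u)+t\bmod 2\pi$ in the $\mathbb{S}^1$-factor of the coordinate $(x,y,\theta)$, the $t$-variable is genuinely periodic with period $2\pi$. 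Hence $(t,u)\mapsto F(t,u)$ descends to an isometric immersion of $\mathbb{S}^1(1)\times[0,L]$ with the flat product metric $dt^2+du^2$, where $\mathbb{S}^1(1)$ carries the metric $dt^2$ (circumference $2\pi$). I would state this explicitly, noting that the fibre $t\mapsto F(t,u)$ is exactly a fibre circle of the hyperbolic Hopf fibering, which has length $2\pi$, consistent with $\xi=\epsilon_3=\partial/\partial\theta$ being a unit field.

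For the mean curvature claim, I would invoke the Gauss formula derived above for $H_\beta$: with $\hat T$ the horizontal lift of $\overline T=\beta'$ and $\xi$ the Reeb field, $\{\hat T,\xi\}$ is an orthonormal tangent frame of $H_\beta$ and $\hat N=\varphi\hat T$ is a unit normal; the formulas
\[
\nabla_{\hat T}\hat T=(\kappa_\beta\circ\pi)\,\hat N,\qquad
\nabla_{\hat T}\xi=\nabla_\xi\hat T=\hat N,\qquad
\nabla_\xi\xi=0
\]
give the second fundamental form components $h(\hat T,\hat T)=\kappa_\beta\circ\pi$, $h(\hat T,\xi)=1$, $h(\xi,\xi)=0$. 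Since $\{\hat T,\xi\}$ is orthonormal, the mean curvature is $\tfrac12\big(h(\hat T,\hat T)+h(\xi,\xi)\big)=\tfrac12(\kappa_\beta\circ\pi)$, which is exactly the asserted value. (The same formulas also re-confirm flatness, since the shape operator has determinant $-1$ and trace $\kappa_\beta\circ\pi$, whence Gauss curvature $\det - (\text{extrinsic correction from ambient curvature})$; but flatness is already recorded, so I would just cite it.) I would only need to double-check the orientation/sign convention on $\kappa_\beta$ so that the composite $\kappa_\beta\circ\pi$ is the one meant in the statement.

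The main (and only genuine) obstacle is bookkeeping rather than mathematics: one must be careful that the horizontal lift condition $d\theta/du=-(1/y)\,dx/du$ is precisely what makes $F_u=\tfrac{x'}{2y}\epsilon_1+\tfrac{y'}{2y}\epsilon_2$ horizontal and of unit length when $\beta$ is arc-length parametrized (using $\omega^1=dx/2y$, $\omega^2=dy/2y$, so $|F_u|^2=(x')^2/(4y^2)+(y')^2/(4y^2)=|\beta'|^2_{\bar g}=1$), and that $\hat N=\varphi\hat T$ rather than $-\varphi\hat T$; both points are handled by the earlier computations, so no new work is required. I would therefore present the proof as: (i) read off $g_{H_\beta}=dt^2+du^2$ and the $2\pi$-periodicity in $t$ to get the isometry type; (ii) quote the Gauss formula and compute the trace of $h$ to get the mean curvature $(\kappa_\beta\circ\pi)/2$.
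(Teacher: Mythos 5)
Your proposal is correct and follows essentially the same route as the paper: the Proposition is stated there as a summary of the immediately preceding computation of the induced metric $dt^2+du^2$ and of the second fundamental form $h(\hat T,\hat T)=\kappa_\beta\circ\pi$, $h(\hat T,\xi)=1$, $h(\xi,\xi)=0$, from which the mean curvature is half the trace in the orthonormal frame $\{\hat T,\xi\}$. The only caveat is that the horizontal-lift ODE you quote, $d\theta/du=-(1/y)\,dx/du$, is a misprint in the paper (the condition $\eta(\hat\beta{}^{\prime})=0$ actually reads $d\theta/du=-x^{\prime}/(2y)$, as used later in the Legendre example), but this does not affect your argument since the induced metric and the Gauss formulas you invoke are independent of that misprint.
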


If $\beta$ is a closed curve, \emph{i.e.}, 
$\beta(u+L)=\beta(u)$ for all 
$u\in\mathbb{R}$, then the relation
$F(t,u)=\hat\beta(u)k(t)$ defines a covering of the $(t,u)$ plane onto an immersed torus in 
$\mathrm{SL}_2\mathbb{R}$, called the \emph{Hopf torus} corresponding to $\beta$. 

\begin{remark}
{\rm
The Hopf tube 
$H_{\beta}$ can be parametrized by 
the following immersion.  
\[
F_1(u,v)=(x(u),y(u),v),\ \ u\in\mathbb{R},\> v\in\mathbb{S}^1.
\] 
Then by using the Iwasawa decomposition, 
the Hopf tube over $\beta$ can be parametrized as an immersion 
$F_1:\mathbb{R}\times\mathbb{S}^1\to\mathrm{SL}_2\mathbb{R}$:
\begin{equation}
F_1(u,v)=
\left (
\begin{array}{cc}
1 & x(u) \\ 0 & 1 
\end{array}
\right )
\left (
\begin{array}{cc}
\sqrt{y(u)} & 0 \\ 0 & 1/\sqrt{y(u)}
\end{array}
\right )
\left (
\begin{array}{cc}
\cos v & \sin v \\
-\sin v & \cos v
\end{array}
\right).
\end{equation}
Under this parametization the induced metric 
is computed as:
\[
\left(dv+\frac{x^{\prime}(u)}{2y(u)}du\right)^2+du^2.
\]
One can check that the induced metric on $H_{\beta}$ is flat.
This parametization is used in \cite{I2004, Kokubu}.
}
\end{remark}

For later use we recall here the classification of 
Hopf tubes with constant mean curvature \cite{Kokubu} (see also 
Appendix \ref{sectionA2}).

\begin{proposition}
[Classification 
of CMC Hopf tubes]
Let $\beta$ be a unit speed curve in $\mathbb{H}^2(-4)$ 
with curvature $\kappa$ and
$H_{\beta}$ the Hopf tube over $\beta$ in $\mathrm{SL}_2\mathbb{R}$.
Then $H_{\beta}$ is
of constant mean curvature if and only if
$\beta$ is a Riemannian circle in $\mathbb{H}^2(-4)$. 

The Hopf cylinder $H_{\beta}$ is
classified in the 
following way{\rm :}
\begin{enumerate}
\item If $\kappa_{\beta}=0$, then $H_\beta$ is a minimal Hopf tube over a geodesic.
\item 
If $0<\kappa_{\beta}^2<4$, then 
$H_{\beta}$ is a Hopf tube over an open circle 
or a Hopf tube over a line segment
$y=\pm (\sqrt{1-4\kappa^2}/(2\kappa))x$.

\item If $\kappa_{\beta}^2=4$, then
$H_{\beta}$ is a Hopf tube over a horocycle
or a Hopf tube over $y=\mbox{constant}$. 
\item If $\kappa_{\beta}^2>4$, then 
$H_{\beta}$ is a Hopf tube over a closed circle. In this case, 
$H_{\beta}$ is an embedded Hopf torus.
\end{enumerate}
\end{proposition}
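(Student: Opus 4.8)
The plan is to reduce the statement to a computation about the signed curvature $\kappa_\beta$ of curves in $\mathbb{H}^2(-4)$, exploiting the fact — already recorded in the preceding Proposition — that the mean curvature of $H_\beta$ equals $(\kappa_\beta\circ\pi)/2$. Since $\pi$ is a surjective submersion and $H_\beta$ is connected, the mean curvature of $H_\beta$ is constant if and only if $\kappa_\beta$ is constant along $\beta$; and a unit-speed curve in a surface of constant curvature with constant signed curvature is precisely a \emph{Riemannian circle} (in the broad sense that includes geodesics, genuine circles, horocycles and equidistant curves). This proves the first assertion, and also shows the classification must be organized by the value of $\kappa_\beta^2$ relative to the curvature $|{-4}|=4$ of the base, since in a space of curvature $-4$ the qualitative type of a constant-curvature curve changes as $\kappa$ crosses the threshold $2$: geodesic for $\kappa=0$, equidistant (hypercycle) for $0<\kappa^2<4$, horocycle for $\kappa^2=4$, and a genuine (closed) geodesic circle for $\kappa^2>4$. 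This is the classical trichotomy for curves of constant geodesic curvature in the hyperbolic plane, rescaled to curvature $-4$.

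The substantive part is then to identify, for each of the four ranges, an explicit model curve in the upper half-plane model $\mathbb{H}^2(-4)=\{(x,y):y>0\}$ with metric $(dx^2+dy^2)/(4y^2)$, because the statement of the Proposition names specific curves (line segments $y=\pm(\sqrt{1-4\kappa^2}/(2\kappa))x$, horocycles, and the loci $y=\text{const}$). I would first compute the signed curvature of the coordinate loci: a vertical ray is a geodesic ($\kappa=0$); a Euclidean half-line through the origin $y=mx$ is an equidistant curve, and a short computation of $\overline\nabla_{\bar T}\bar T$ in the Poincar\'e metric gives its constant geodesic curvature in terms of the slope $m$; and a horizontal line $y=c$ is a horocycle with $\kappa^2=4$. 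One then matches these against the given formulas, using the freedom of isometries of $\mathbb{H}^2(-4)$ (which act transitively on curves of a given constant curvature) to reduce an arbitrary constant-curvature curve to one of these normal forms. Finally, for $\kappa_\beta^2>4$ the curve $\beta$ is a closed (compact) circle, so $\pi^{-1}(\beta)$ is a genuine Hopf torus; that it is moreover \emph{embedded} follows because $\beta$ is a simple closed curve and $\pi$ restricted to a fibre over an embedded circle is an embedding onto a torus — this uses the principal $\mathbb{S}^1$-bundle structure and the flatness already established, so no extra work beyond the bundle picture.

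The main obstacle I anticipate is not conceptual but bookkeeping: carefully carrying the constant-curvature condition through the Gauss map / Frenet apparatus of the Poincar\'e metric to produce the precise constants appearing in the line-segment formula $y=\pm(\sqrt{1-4\kappa^2}/(2\kappa))x$, and making sure the four cases are exhaustive and mutually exclusive with the correct thresholds (in particular verifying that, after normalizing by an isometry, the $0<\kappa^2<4$ case genuinely splits into "open circle" versus "line segment through the origin" according to whether $\beta$ is complete, and that $\kappa^2=4$ splits into "horocycle" versus "$y=\text{const}$" likewise). A secondary point requiring care is the CMC $\Leftrightarrow$ Riemannian-circle equivalence: the forward direction is immediate from $H=(\kappa_\beta\circ\pi)/2$, but I should note explicitly that $\kappa_\beta$ is well defined up to sign (choice of orientation of $\beta$), so "constant mean curvature" really does force $|\kappa_\beta|$, hence $\kappa_\beta$ up to a global sign, to be constant. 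Everything else reduces to the classical classification of curves of constant geodesic curvature in $\mathbb{H}^2$, which I would invoke (or reprove in two lines via the ODE for $\kappa_\beta$ along a unit-speed curve), together with the explicit coordinate computations above.
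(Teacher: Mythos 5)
Your proposal is correct and follows essentially the same route as the paper: the equivalence CMC $\Leftrightarrow$ Riemannian circle is read off from the mean-curvature formula $H=(\kappa_\beta\circ\pi)/2$ established in the preceding proposition, and the four cases are exactly the classification of constant-curvature curves in $\mathbb{H}^2(-4)$ by the threshold $\kappa_\beta^2=4$, which the paper carries out in Appendix~\ref{sectionA2} by integrating the Frenet ODE via the substitution $x'/(2y)=\cos\mu$, $y'/(2y)=\sin\mu$, $\mu'=\kappa-2\cos\mu$ (your normalization by isometries is replaced there by a direct integration, but the content is the same). The one place where your ``bookkeeping'' would not reproduce the printed statement is the slope in case (2): the appendix computation yields the equidistant lines $x=\pm\bigl(\kappa/\sqrt{4-\kappa^2}\bigr)y+x_0$, whereas the expression $\sqrt{1-4\kappa^2}/(2\kappa)$ in the proposition is not even real for $\kappa^2>1/4$ and appears to be a misprint carried over from a different curvature normalization, so you should not expect to derive it as written.
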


\subsection{Contact magnetic curves and Hopf tubes}
We investigate the projection image of contact magnetic curves. 
First we recall the following fundamental fact.

\begin{proposition}
Let $\gamma(u)$ be an arclength parametrized contact magnetic curve 
in a $3$-dimensional Sasakian manifold $(M,\varphi,\xi,\eta,g)$. Then, 
the contact angle $\sigma(u)$, defined by $\cos\sigma(u)=g(\xi,\gamma^{\prime}(u))$,
is constant along $\gamma$.
\end{proposition}

Contact magnetic curves are included in some Hopf tubes. Moreover, we have the following.
\begin{theorem}
A contact magnetic curve $\gamma$ in 
$\mathrm{SL}_2\mathbb{R}$ is a 
geodesic of the Hopf tube $H_\beta$ over 
$\beta=\pi\circ \gamma$.
\end{theorem}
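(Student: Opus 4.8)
The plan is to invoke the standard characterization: a curve lying in a submanifold is a geodesic of that submanifold precisely when its acceleration in the ambient manifold is everywhere normal to the submanifold (this is the tangential part of the Gauss formula). Since $\gamma$ is a contact magnetic curve it satisfies the Lorentz equation $\nabla_{\gamma'}\gamma'=q\varphi\gamma'$, so the theorem reduces to the single claim that $\varphi\gamma'$ is normal to $H_\beta$ along $\gamma$.

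First I would record the trivial containment: from $\beta=\pi\circ\gamma$ we get $\pi(\gamma(u))=\beta(u)$ for every $u$, hence $\gamma(u)\in\pi^{-1}(\beta)=H_\beta$; and since $\gamma$ is arclength parametrized in $(\mathrm{SL}_2\mathbb{R},g)$ and $H_\beta$ carries the induced metric, $\gamma$ is arclength parametrized as a curve in $H_\beta$ as well. Next, by the preceding proposition the contact angle $\sigma$ is constant along $\gamma$. Assume first $\sin\sigma\neq0$. Decompose $\gamma'$ into its parts with respect to the Riemannian submersion $\pi$: the vertical part is $g(\gamma',\xi)\xi=(\cos\sigma)\xi$, and the horizontal part $(\gamma')^{\mathrm{hor}}$ has length $|\sin\sigma|$ and satisfies $d\pi\big((\gamma')^{\mathrm{hor}}\big)=\beta'$. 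Writing $\overline{T}:=\beta'/|\beta'|$ for the unit tangent of $\beta$ and $\hat T$ for its horizontal lift, this forces $(\gamma')^{\mathrm{hor}}=(\sin\sigma)\hat T$ (after orienting $\beta$ so that $\sin\sigma>0$), hence
\[
\gamma'=(\sin\sigma)\,\hat T+(\cos\sigma)\,\xi .
\]
Recall from the analysis of Hopf tubes that $\{\hat T,\xi\}$ is an orthonormal frame field of $H_\beta$ and that $\hat N=\varphi\hat T$ is a unit normal field of $H_\beta$.

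Now applying $\varphi$ and using $\varphi\xi=0$ gives $\varphi\gamma'=(\sin\sigma)\,\varphi\hat T=(\sin\sigma)\,\hat N$, which is normal to $H_\beta$. Therefore, by the Lorentz equation, $\nabla_{\gamma'}\gamma'=q\varphi\gamma'=q(\sin\sigma)\hat N$ has vanishing tangential component along $H_\beta$; equivalently $\nabla^{H_\beta}_{\gamma'}\gamma'=0$, so $\gamma$ is a geodesic of $H_\beta$. In the remaining degenerate case $\sin\sigma=0$ one has $\gamma'=\pm\xi$, so $\beta$ is a point and $\gamma$ is an integral curve of the Reeb field, which is a geodesic of $\mathrm{SL}_2\mathbb{R}$ since the fibres of $\pi$ are totally geodesic.

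The computation is short; the only step requiring care is the identification $(\gamma')^{\mathrm{hor}}=(\sin\sigma)\hat T$ — that is, that the horizontal component of the unit velocity of a slant curve is exactly the rescaled horizontal lift of the unit tangent of its projection — which is where the Riemannian submersion structure of $\pi$ and the constancy of $\sigma$ enter, together with the minor bookkeeping to dispose of the degenerate Reeb-orbit case.
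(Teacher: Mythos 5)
Your proof is correct and follows essentially the same route as the paper: both reduce the statement, via the Gauss formula, to the single observation that the Lorentz force $q\varphi\gamma'$ is proportional to the unit normal $\hat N=\varphi\hat T$ of the Hopf tube, so the tangential part of $\nabla_{\gamma'}\gamma'$ vanishes. The only difference is presentational --- the paper checks $\varphi\gamma'\parallel\hat N$ by explicit computation in the frame $\{\epsilon_1,\epsilon_2,\epsilon_3\}$, while you obtain it invariantly from the submersion decomposition $\gamma'=\sin\sigma\,\hat T+\cos\sigma\,\xi$ --- and you additionally dispose of the degenerate Reeb-orbit case, which the paper leaves implicit.
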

\proof
Let $\gamma(u)$ be a contact magnetic curve with 
strength $q$ parametrized by arclength $u$. 
Set $\beta=\pi\circ\gamma$ then $\gamma$ is 
contained in the Hopf tube $H_{\beta}$. 
Remark that $u$ is not, in general, 
the arclength parameter of $\beta$. 
Now let $\hat{N}$ the unit normal vector field along 
$H_{\beta}$ as before, then we have 
$\pi_{*}\hat{N}=\overline{N}$. 
The Gauss-formula of $H_{\beta}$ implies
\begin{equation}
\label{HopftubeGauss}
\nabla_{\gamma^{\prime}}
\gamma^{\prime}=\dot{\nabla}
_{\gamma^{\prime}}
\gamma^{\prime}+h(\gamma^{\prime},\gamma^{\prime})\hat{N}.
\end{equation}
Here $\dot{\nabla}$ is the Levi-Civita connection of 
$H_{\beta}$ and $h$ is as before.

Let us express $\gamma$ as $\gamma(u)=(x(u),y(u),\theta(u))$, then 
the velocity vector field $\gamma^{\prime}$ is 
given by
\begin{equation}
\label{magneticvelocity}
\gamma^{\prime}=
\frac{x^\prime}{2y}\epsilon_1
+
\frac{y^\prime}{2y}\epsilon_2
+\eta(\gamma^\prime)\epsilon_3, \ \ 
\eta(\gamma^\prime)
=\theta^\prime+
\frac{x^\prime}{2y}.
\end{equation}
Thus we get
$$
\varphi\gamma^{\prime}=
-\frac{y^\prime}{2y}\epsilon_1
+
\frac{x^\prime}{2y}\epsilon_2.
$$
On the other hand $\hat{N}$ is expressed as 
\[
\hat{N}=\frac{1}{\sqrt{(x^\prime)^2+(y^\prime)^2}}
\left(
-y^{\prime}\epsilon_1+x^{\prime}\epsilon_2
\right).
\]
It follows that $q\varphi\gamma^{\prime}$
is collinear to $\nu$. Comparing with 
the magnetic equation 
$\nabla_{\gamma^\prime}\gamma^{\prime}=q\varphi
\gamma^{\prime}$ and 
\eqref{HopftubeGauss} we find $\dot{\nabla}_{\gamma^\prime}
\gamma^{\prime}=0$.
\endproof

Let $\gamma(u)$ be an arclength parametrized contact magnetic curve 
in $\mathrm{SL}_2\mathbb{R}$, then the projection curve $\beta(u)=\pi(\gamma(u))$ has 
the velocity $\beta^{\prime}(u)=\pi_{*}\gamma^{\prime}$. Since $\pi$ is a 
Riemannian submersion, we have 
\[
|\beta^{\prime}(u)|^{2}=|\gamma^{\prime}(u)|^2-\eta(\gamma^{\prime})^2
=\sin^2\sigma.
\] 

\subsection{K{\"a}hler magnetic curves}

Let us consider the magnetic curve in $\mathbb{H}^2(-4)$ with 
respect to the magnetic field $\bar{F}_q:=\bar{q}\bar{\Omega}$. 
Here $\bar{\Omega}$ is the 
K{\"a}hler form of $\mathbb{H}^2(-4)$ defined by 
\linebreak
$\bar{\Omega}=(dx\wedge dy)/(2y^2)$ as in Example \ref{Example2.1}. 
The corresponding Lorentz equation is 
$\overline{\nabla}_{\beta^\prime}\beta^{\prime}=\bar{q}J\beta^\prime$.
Here $\overline{\nabla}$ is 
the Levi-Civita connection of 
$\mathbb{H}^{2}(-4)$. 
Comparing the Lorentz equation with the Frenet equation, we 
obtain that $\beta$ is a Riemannian circle in $\mathbb{H}^{2}
(-4)$ satisfying $\bar{q}=\kappa_{\beta}$. 
Hence normal magnetic trajectories 
are closed if and only if $|q|>2$.

By the fundamental equation of Riemannian submersion one can 
check the following result.
 
\begin{proposition}
The projection image $\beta(u)=\pi(\gamma(u))$ of a 
contact magnetic curve is a K{\"a}hler magnetic curve in 
$\mathbb{H}^{2}(-4)$. More precisely, $\beta$ satisfies the Lorentz equation
\linebreak
$\overline{\nabla}_{\beta^{\prime}}\beta^{\prime}=(q-2\cos\sigma)J\beta^{\prime}$. 
Hence $\gamma(u)$ is a geodesic in a Hopf tube over a Riemannian circle.
\end{proposition}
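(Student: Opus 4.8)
The plan is to apply the fundamental (O'Neill) equations of the hyperbolic Hopf fibering $\pi\colon(\mathrm{SL}_{2}\mathbb{R},g)\to\mathbb{H}^{2}(-4)$, a Riemannian submersion with totally geodesic fibres whose horizontal distribution is the contact distribution of $\eta$. Let $\gamma$ be an arclength parametrized contact magnetic curve of strength $q$; by the previous Proposition its contact angle $\sigma$ is constant, so we split $\gamma'=X+\cos\sigma\,\xi$ with $X:=\gamma'-\eta(\gamma')\xi$ horizontal, $\pi_{*}X=\beta'$, $|X|^{2}=\sin^{2}\sigma$. Since $\varphi\xi=0$ and $\varphi$ preserves the horizontal distribution, $\varphi\gamma'=\varphi X$ is horizontal with $\pi_{*}(\varphi\gamma')=J\beta'$. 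Working locally, I would extend $X$ along $\gamma$ to the horizontal lift $\widetilde{V}$ of a vector field on $\mathbb{H}^{2}(-4)$ prolonging $\beta'$; in the coordinates $(x,y,\theta)$ this lift does not involve $\theta$, hence $[\xi,\widetilde{V}]=0$, $\widetilde{V}$ is a basic horizontal field with integral curve $\beta$ downstairs, and (standard O'Neill) $\nabla_{\widetilde{V}}\widetilde{V}$ is again horizontal with $\pi_{*}(\nabla_{\widetilde{V}}\widetilde{V})=\overline{\nabla}_{\beta'}\beta'$.

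The heart of the matter is the identity along $\gamma$
\[
\nabla_{\gamma'}\gamma'=\nabla_{\gamma'}X+\cos\sigma\,\nabla_{\gamma'}\xi
=\nabla_{\widetilde{V}}\widetilde{V}+\cos\sigma\,\nabla_{\xi}\widetilde{V}+\cos\sigma\,\varphi\gamma'
=\nabla_{\widetilde{V}}\widetilde{V}+2\cos\sigma\,\varphi\gamma',
\]
using the Sasakian relation $\nabla_{Z}\xi=\varphi Z$, the vanishing $[\xi,\widetilde{V}]=0$ to get $\nabla_{\xi}\widetilde{V}=\nabla_{\widetilde{V}}\xi=\varphi\widetilde{V}$, and $\varphi\widetilde{V}=\varphi\gamma'$. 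Inserting the Lorentz equation $\nabla_{\gamma'}\gamma'=q\,\varphi\gamma'$ gives $\nabla_{\widetilde{V}}\widetilde{V}=(q-2\cos\sigma)\varphi\gamma'$, an equality of horizontal vectors; applying $\pi_{*}$ yields exactly $\overline{\nabla}_{\beta'}\beta'=(q-2\cos\sigma)J\beta'$. This is the Lorentz equation of the Kähler magnetic field $\bar{F}_{q-2\cos\sigma}$ on $\mathbb{H}^{2}(-4)$, so $\beta$ is a Kähler magnetic curve there.

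To finish, $\beta$ has constant speed $|\sin\sigma|$; normalising to unit speed and comparing with the Frenet equations of $\mathbb{H}^{2}(-4)$ (as in the discussion of Kähler magnetic curves above) shows that $\beta$ has constant signed curvature $\kappa_{\beta}=(q-2\cos\sigma)/\sin\sigma$ — a geodesic when $q=2\cos\sigma$ — hence is a Riemannian circle in $\mathbb{H}^{2}(-4)$. Combining this with the theorem that a contact magnetic curve in $\mathrm{SL}_{2}\mathbb{R}$ is a geodesic of the Hopf tube over its projection, $\gamma$ is a geodesic in a Hopf tube over a Riemannian circle, which is the last assertion.

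The one delicate point is the coefficient $2\cos\sigma$, which is built from two copies of $\cos\sigma\,\varphi\gamma'$: one from $\cos\sigma\,\nabla_{\gamma'}\xi$ in the splitting $\gamma'=X+\cos\sigma\,\xi$, and one from $\cos\sigma\,\nabla_{\xi}\widetilde{V}=\cos\sigma\,\varphi\widetilde{V}$ (equivalently, from the O'Neill $A$-tensor of this fibration, which is governed by $\varphi$). I would cross-check it by the direct computation in the frame $\{\epsilon_{1},\epsilon_{2},\epsilon_{3}\}$: with $\gamma'=a\epsilon_{1}+b\epsilon_{2}+\cos\sigma\,\epsilon_{3}$, $a^{2}+b^{2}=\sin^{2}\sigma$, the listed Levi-Civita data give $\nabla_{\gamma'}\gamma'=(a'-2ab-2b\cos\sigma)\epsilon_{1}+(b'+2a^{2}+2a\cos\sigma)\epsilon_{2}$, the Lorentz equation forces $a'=b(2a+2\cos\sigma-q)$ and $b'=-a(2a+2\cos\sigma-q)$, and since the horizontal parts of $\nabla_{\epsilon_{i}}\epsilon_{j}$ project to the Levi-Civita connection of $\mathbb{H}^{2}(-4)$ in the frame $\pi_{*}\epsilon_{1}=2y\,\partial_{x}$, $\pi_{*}\epsilon_{2}=2y\,\partial_{y}$, one obtains $\overline{\nabla}_{\beta'}\beta'=(a'-2ab)\,\pi_{*}\epsilon_{1}+(b'+2a^{2})\,\pi_{*}\epsilon_{2}=(q-2\cos\sigma)J\beta'$, confirming the coefficient.
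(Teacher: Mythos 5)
Your proof is correct and is essentially the argument the paper intends: the proposition is justified there by appeal to the fundamental (O'Neill) equations of the Riemannian submersion, which is exactly the computation you carry out via the decomposition $\gamma'=\widetilde{V}+\cos\sigma\,\xi$ and the identity $\nabla_{\gamma'}\gamma'=\nabla_{\widetilde{V}}\widetilde{V}+2\cos\sigma\,\varphi\gamma'$, while your frame cross-check reproduces verbatim the system for $X=x'/(2y)$, $Y=y'/(2y)$ that the paper derives in Section 5 and identifies with the K\"ahler magnetic equation of strength $\bar{q}=q-2\cos\sigma$. The coefficient $2\cos\sigma$, the relation $\pi_{*}\circ\varphi=J\circ\pi_{*}$ on horizontal vectors, and the curvature $\kappa_{\beta}=\bar{q}/\sin\sigma$ after renormalizing to unit speed all check out (the only degenerate case, $\sin\sigma=0$, being trivial).
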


\section{Magnetic trajectories in $\mathrm{SL}_2\mathbb{R}$}

In our previous paper \cite{DIMN1},
we have proved that the classification of contact magnetic curves in 
Sasakian space forms of arbitrary dimension reduces to that in $3$-dimensional Sasakian space forms. 
More precisely, we have the following results.
\begin{theorem}\label{magnSasaki}
Let $(M^{2n+1},\varphi,\xi,\eta,g)$ be a Sasakian manifold and consider
$F_q$, $q\neq 0$, the contact magnetic field on $M^{2n+1}$. Then $\gamma$ is a normal
magnetic curve associated to $F_q$ in $M^{2n+1}$ if 
and only if $\gamma$ belongs to the following list:
\begin{itemize}
\item[{\rm a)}] geodesics, 
obtained as integral curves of $\xi${\rm;}
\item[{\rm b)}] non-geodesic 
$\varphi$-circles of curvature 
$\kappa_1 = \sqrt{q^2-1}$, for $|q|>1$, 
and of constant contact angle $\sigma = \arccos \frac{1}{q}${\rm;}
\item[{\rm c)}] Legendre $\varphi$-curves in $M^{2n+1}$ with curvatures 
$\kappa_1 = |q|$ and $\kappa_2=1$, i.e. $1$-dimensional integral
submanifolds of the contact distribution{\rm;}
\item[{\rm d)}] 
$\varphi$-helices of order $3$ 
with axis $\xi$, having curvatures 
$\kappa_1=|q|\sin\sigma$ and 
$\kappa_2=|q\cos\sigma-1|$, where $\sigma\neq\frac{\pi}{2}$ is the constant contact angle.
\end{itemize}
\end{theorem}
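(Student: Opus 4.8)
The plan is to reduce the classification to an essentially $3$-dimensional Frenet computation. First I would check that $\sigma$ is constant: differentiating $\cos\sigma(u)=g(\xi,\gamma'(u))$ and using $\nabla_X\xi=\varphi X$ from \eqref{Sasaki2}, the skew-symmetry of $\varphi$, and \eqref{eqLcontact}, one gets $(\cos\sigma)'=g(\varphi\gamma',\gamma')+q\,g(\xi,\varphi\gamma')=0$. The crucial step is then to show that the bundle $V:=\mathrm{span}\{\gamma',\varphi\gamma',\xi\}$ along $\gamma$ is invariant under $\nabla_{\gamma'}$. Indeed $\nabla_{\gamma'}\gamma'=q\varphi\gamma'\in V$ by \eqref{eqLcontact}, $\nabla_{\gamma'}\xi=\varphi\gamma'\in V$ by \eqref{Sasaki2}, and with the Sasakian identities $(\nabla_X\varphi)Y=-g(X,Y)\xi+\eta(Y)X$, $\varphi^2=-I+\eta\otimes\xi$, and $\eta(\gamma')=\cos\sigma$, a one-line computation gives
\[
\nabla_{\gamma'}(\varphi\gamma')=(q\cos\sigma-1)\,\xi+(\cos\sigma-q)\,\gamma'\in V .
\]
Since $V$ has rank $1$ when $\sin\sigma=0$ and rank $3$ otherwise, the entire Frenet frame of $\gamma$ stays inside $V$, so $\gamma$ has osculating order at most $3$ and behaves exactly like a contact magnetic curve in a $3$-dimensional Sasakian manifold; in particular the quoted slant-helix Proposition for dimension $3$ applies verbatim.

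Next, assuming $\sin\sigma\neq0$ and writing $\varepsilon=\mathrm{sign}\,q$, I would orthonormalize $V$ to produce the Frenet frame $T=\gamma'$, $N=\tfrac{\varepsilon}{|\sin\sigma|}\varphi\gamma'$, $B=\pm\tfrac{\varepsilon}{|\sin\sigma|}(\xi-\cos\sigma\,\gamma')$. From $\nabla_{\gamma'}\gamma'=q\varphi\gamma'$ together with $|\varphi\gamma'|^2=1-\cos^2\sigma$ one reads off $\kappa_1=|q|\,|\sin\sigma|$; differentiating $N$ via the displayed formula for $\nabla_{\gamma'}(\varphi\gamma')$ and comparing with the Frenet equations gives $\kappa_2=|q\cos\sigma-1|$ and fixes the sign in $B$, and, since $V$ is only $3$-dimensional, $\nabla_{\gamma'}B=-\kappa_2 N$, so the third curvature vanishes.

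The list then emerges by distinguishing cases on $\sin\sigma$, $\cos\sigma$ and $q\cos\sigma-1$. If $\sin\sigma=0$ then $\gamma'=\pm\xi$ and $\nabla_{\gamma'}\gamma'=\nabla_\xi\xi=\varphi\xi=0$, an integral curve of $\xi$ — case a). If $\cos\sigma=0$ then $g(\gamma',\xi)=0$, $\kappa_1=|q|$ and $\kappa_2=1$: a Legendre curve, i.e.\ a $1$-dimensional integral submanifold of the contact distribution — case c). If $q\cos\sigma=1$ (which forces $|q|>1$, hence $\sigma=\arccos\tfrac{1}{q}$, since $\sin\sigma\neq0$) then $\kappa_2=0$ and $\kappa_1=|q|\sqrt{1-1/q^2}=\sqrt{q^2-1}$, a $\varphi$-circle — case b). In all remaining cases $\kappa_1,\kappa_2>0$ and $B$ is a fixed combination of $\xi$ and $\gamma'$, so $\gamma$ is a $\varphi$-helix of order $3$ with axis $\xi$ — case d). For the converse I would reverse each computation: every curve in the list carries enough structural data (axis $\xi$, or Legendre, or $\varphi$-circle with $N\parallel\varphi T$) to rewrite $\nabla_{\gamma'}\gamma'=\kappa_1 N$ as a multiple of $\varphi\gamma'$ and to verify that the multiple is exactly $q$; alternatively, uniqueness of solutions of \eqref{eqLcontact} with given initial position and velocity identifies each such curve with the corresponding magnetic trajectory.

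The main obstacle is the parallelism of $V$: it is precisely the closure of $\mathrm{span}\{\gamma',\varphi\gamma',\xi\}$ under $\nabla_{\gamma'}$ that encodes the Sasakian structure and reduces the $(2n+1)$-dimensional problem to the $3$-dimensional one. Once that is secured, what remains is careful bookkeeping of the Frenet equations, the only delicate points being the signs ($\varepsilon$ and the sign of $q\cos\sigma-1$) and the treatment of the degenerate contact angles $\sigma\in\{0,\tfrac{\pi}{2},\pi\}$.
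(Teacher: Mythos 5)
Your argument is correct, and it is essentially the standard proof: the paper itself only quotes this theorem from \cite{DIMN1} without reproving it, but your key step (the $\nabla_{\gamma'}$-invariance of $\mathrm{span}\{\gamma',\varphi\gamma',\xi\}$, followed by the Frenet computation giving $\kappa_1=|q|\,|\sin\sigma|$, $\kappa_2=|q\cos\sigma-1|$ and the case split on $\sin\sigma$, $\cos\sigma$, $q\cos\sigma-1$) is exactly the mechanism behind the cited result and is consistent with the slant-helix Proposition recalled in Section 2.4.
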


\begin{theorem}[\cite{DIMN1}]
\label{thm_b2nxr}
Let  $\gamma: I\subseteq \mathbb{R} 
\to B^{2n}(-4)\times\mathbb{R}$, 
be a smooth curve parametrized by arclength $s$ 
and let $F_q=q\Omega$, 
$q\neq 0$ be the contact 
magnetic field.
Then $\gamma$ is a normal magnetic 
curve associated to $F_q$ if and only if 
it belongs to the following list{\rm:} 
\begin{itemize}
\item[{\rm a)}] a geodesic obtained as 
integral curve of $\xi${\rm;}
\item[{\rm b)}] 
the horizontal lift of a magnetic trajectory in $B^2(-4)$ 
corresponding to the K{\"a}hler magnetic field $\bar{F} = \bar{q}\bar{\Omega}${\rm;}
\item[{\rm c)}] a helix in the 
$3$-dimensional Sasakian space form $\widetilde{{\rm PSL}}_2\mathbb{R}$ 
identified with $B^2(-4)\times\mathbb{R}$ 
as totally geodesic submanifold in $B^{2n}(-4)\times\mathbb{R}$. 
Moreover, $\gamma$ is a geodesic on a 
Hopf tube over a curve of constant curvature 
in $\mathbb{H}^2(-4)$. 
\end{itemize}
\end{theorem}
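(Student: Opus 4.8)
\proof
We only sketch the argument; the complete proof is given in \cite{DIMN1}. The idea is to read the statement off from the general classification of Theorem~\ref{magnSasaki}, applied to the Sasakian space form $B^{2n}(-4)\times\mathbb{R}$, combined with the structure of its totally geodesic Sasakian submanifolds and with the Hopf tube picture developed in Section~4. By Example~\ref{Example2.1} (with $c=2$), the product $B^{2n}(-4)\times\mathbb{R}$ carries a canonical Sasakian structure, so Theorem~\ref{magnSasaki} applies and every normal magnetic curve $\gamma$ of $F_q$ is of one of the types a)--d) listed there, according to the value of the constant contact angle $\sigma$. Type a), the integral curves of $\xi$, is precisely item a) of the present statement, so it remains to match the slant types b), c), d) with items b) and c) here.

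Assume first that $\gamma$ is Legendre, i.e. $\sigma=\pi/2$ (type c) of Theorem~\ref{magnSasaki}). Then $\gamma^{\prime}$ lies in the contact distribution, which is the horizontal distribution of the Riemannian submersion $\pi:B^{2n}(-4)\times\mathbb{R}\to B^{2n}(-4)$; since $\varphi$ preserves this distribution, the acceleration $\nabla_{\gamma^{\prime}}\gamma^{\prime}=q\,\varphi\gamma^{\prime}$ is horizontal as well. Hence $\gamma$ remains horizontal, so it is the horizontal lift of $\beta=\pi\circ\gamma$, and the submersion formula gives $\overline{\nabla}_{\beta^{\prime}}\beta^{\prime}=\pi_{*}(q\,\varphi\gamma^{\prime})=q\,J\beta^{\prime}$. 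Thus $\beta$ is a K\"ahler magnetic trajectory of $\bar{F}=\bar{q}\bar{\Omega}$ with $\bar{q}=q$ --- a Riemannian circle lying in a totally geodesic $B^2(-4)$ --- and $\gamma$ is its horizontal lift, which is item b).

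Assume next that $\sigma\neq 0,\pi/2$ (types b) and d) of Theorem~\ref{magnSasaki}). Writing $\gamma^{\prime}=\cos\sigma\,\xi+\sin\sigma\,e$ with $e$ a unit horizontal field along $\gamma$, the Frenet data supplied by Theorem~\ref{magnSasaki} and by the slant helix proposition recalled in Section~2 (principal normal proportional to $\varphi\gamma^{\prime}$, binormal proportional to $\xi-\cos\sigma\,\gamma^{\prime}$) show that the osculating frame of $\gamma$ stays in the rank-$3$ distribution $\mathcal{D}=\mathbb{R}\xi\oplus\mathrm{span}\{e,\varphi e\}$ along $\gamma$. Using the explicit curvature tensor of $B^{2n}(-4)\times\mathbb{R}$, one verifies that $\mathcal{D}$ is the restriction to $\gamma$ of an integrable autoparallel distribution whose leaves are totally geodesic $3$-dimensional Sasakian space forms of $\varphi$-sectional curvature $-7$; by the classification of such submanifolds each leaf is a copy of $B^2(-4)\times\mathbb{R}\cong\widetilde{\mathrm{PSL}}_2\mathbb{R}$. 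Hence $\gamma$ is a helix inside $\widetilde{\mathrm{PSL}}_2\mathbb{R}$, which is item c). Finally, working inside this $3$-dimensional space, the Proposition on K\"ahler magnetic curves in Section~4 gives $\overline{\nabla}_{\beta^{\prime}}\beta^{\prime}=(q-2\cos\sigma)J\beta^{\prime}$, so after a unit-speed reparametrisation $\beta=\pi\circ\gamma$ is a Riemannian circle of constant curvature, and by the theorem asserting that a contact magnetic curve in $\mathrm{SL}_2\mathbb{R}$ is a geodesic of the Hopf tube over its projection, $\gamma$ is a geodesic of the Hopf tube $H_\beta$ over this curve of constant curvature, which proves the ``moreover'' clause.

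The main obstacle is the reduction to dimension $3$ in the non-Legendre slant case: one must check that $\mathcal{D}$ is parallel along $\gamma$ and coincides with the tangent distribution of an autoparallel submanifold, which relies on the explicit Sasakian curvature tensor (the formula recorded in the Remark following the canonical Sasakian structure, in its $B^{2n}(-4)\times\mathbb{R}$ version) together with the Gauss--Codazzi equations. The integral-curve and Legendre cases, by contrast, are immediate from the submersion formalism, and once $\gamma$ is confined to a $3$-dimensional $\widetilde{\mathrm{PSL}}_2\mathbb{R}$ the Hopf tube assertion is exactly the content of Section~4.
\endproof
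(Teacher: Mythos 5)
The paper does not actually prove Theorem~\ref{thm_b2nxr}: it is imported verbatim from \cite{DIMN1}, so there is no in-paper argument to measure your sketch against. What can be said is that your route --- constancy of the contact angle, then reduction of the $(2n+1)$-dimensional problem to a $3$-dimensional Sasakian space form, with the case split into integral curves of $\xi$, Legendre curves (horizontal lifts of K\"ahler magnetic curves in a totally geodesic $B^2(-4)$), and proper slant curves confined to a totally geodesic $\varphi$-invariant copy of $B^2(-4)\times\mathbb{R}\cong\widetilde{\mathrm{PSL}}_2\mathbb{R}$ --- is exactly the strategy the authors announce for \cite{DIMN1} (``the classification \dots reduces to that in $3$-dimensional Sasakian space forms''), and the matching of the types of Theorem~\ref{magnSasaki} with items a)--c) is correct.

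The place where your write-up is thinnest is also the only place where real work is required. For the slant case you must actually show: (i) that $\mathrm{span}\{\xi,\gamma^{\prime},\varphi\gamma^{\prime}\}$ is parallel along $\gamma$, which follows from $\nabla_{\gamma^{\prime}}\xi=\varphi\gamma^{\prime}$, the Lorentz equation, and $(\nabla_X\varphi)Y=-g(X,Y)\xi+\eta(Y)X$; (ii) that a $\varphi$-invariant $3$-plane containing $\xi$ is tangent to a totally geodesic, $\varphi$-invariant Sasakian submanifold of $B^{2n}(-4)\times\mathbb{R}$, namely $B^{2}(-4)\times\mathbb{R}$ over a totally geodesic complex line of the base; and (iii) that $\gamma$ stays in this leaf, by uniqueness of solutions of the Lorentz ODE, since total geodesy plus $\varphi$-invariance make the restricted equation coincide with the ambient one. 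Your appeal to ``the explicit curvature tensor and Gauss--Codazzi'' gestures at (ii)--(iii) without carrying them out; that is the substance of the proof in \cite{DIMN1}, and the same ODE-uniqueness point is needed (and omitted) in your Legendre case, where ``the acceleration is horizontal, hence $\gamma$ remains horizontal'' should really be the computation $\frac{d}{ds}\,\eta(\gamma^{\prime})=g(\varphi\gamma^{\prime},\gamma^{\prime})+q\,\eta(\varphi\gamma^{\prime})=0$. With these steps supplied the sketch is sound, and the ``moreover'' clause then follows from the Hopf-tube results of Section~4 as you say.
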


Let us now take a contact magnetic curve 
$\gamma(s)=(x(s),y(s),\theta(s))$ in $\mathrm{SL}_2\mathbb{R}$. 
Then the velocity vector field is given by 
\eqref{magneticvelocity}.

The acceleration vector field is computed as
\begin{align*}
\nabla_{\gamma^{\prime}}\gamma^{\prime}=
& \left \{\frac{x^{\prime \prime}y-x^{\prime}y^{\prime}}{2y^2}
-\frac{x^{\prime}y^{\prime}}{2y^2}
-\frac{y^{\prime}}{y}
\eta(\gamma^{\prime})
\right \}
\epsilon_1  +\left \{
\frac{y^{\prime \prime}y-(y^{\prime})^2}{2y^2}
+\frac{(x^{\prime})^2}{2y^2}
+\frac{x^{\prime}}{y}
\eta(\gamma^{\prime})
\right \}\epsilon_2 \\
& +\{\eta(\gamma^{\prime})\}^{\prime}
\epsilon_3.
\end{align*}
The the magnetic equation 
$\nabla_{\gamma^\prime}\gamma^\prime=q\varphi\gamma^\prime$ 
with strength $q$ is the following system:
\begin{align}
\frac{x^{\prime \prime}y-x^{\prime}y^{\prime}}{2y^2}
-\frac{x^{\prime}y^{\prime}}{2y^2}
-\frac{y^{\prime}}{y}
\eta(\gamma^{\prime})
\nonumber
&=
-\frac{qy^{\prime}}{2y},
\\
\frac{y^{\prime \prime}y-(y^{\prime})^2}{2y^2}
+\frac{(x^{\prime})^2}{2y^2}
+\frac{x^{\prime}}{y}
\eta(\gamma^{\prime})
&=\frac{qx^{\prime}}{2y},
\nonumber
\\
\{\eta(\gamma^\prime)\}^\prime
=\left[\theta^\prime+\frac{x^\prime}{2y}\right]^\prime&=0.
\nonumber
\end{align}
The third equation confirms that $\gamma$ is a 
\emph{slant curve}, that is $\gamma^{\prime}$ makes
constant angle $\sigma$ with the Reeb vector field $\xi$.  
By definition of $\sigma$, we notice that 
$\eta(\gamma^{\prime})=\cos\sigma \in [-1,1]$. 
The equations of magnetic trajectory become
\begin{align*}
& \frac{x^{\prime \prime}y
-x^{\prime}y^{\prime}}{2y^2}
-\frac{x^{\prime}y^{\prime}}{2y^2}
-(\cos\sigma)\frac{y^{\prime}}{y}=-\frac{qy^{\prime}}{2y},
\\
& \frac{y^{\prime \prime}y
-(y^{\prime})^2}{2y^2}
+\frac{(x^{\prime})^2}{2y^2}
+(\cos\sigma)\frac{x^{\prime}}{y}=\frac{qx^{\prime}}{2y}. 
\end{align*}
Put
\[
X=\frac{x^{\prime}}{2y},\
\ \ Y=\frac{y^{\prime}}{2y}.
\]
Then we have $X^2+Y^2+\cos^2\sigma=1$, which implies that $X^2+Y^2=\sin^2\sigma$.

Moreover, we find
\[
X^{\prime}=
\frac{x^{\prime \prime}y
-x^{\prime}y^{\prime}}{2y^2},
\ \ 
Y^{\prime}=\frac{y^{\prime \prime}y
-(y^{\prime})^2}{2y^2}.
\]
Hence, the equations of magnetic trajectory yield
the system 
\begin{equation}
\label{KM}
\left\{\begin{array}{l}
X^{\prime}-Y(2X+2\cos\sigma-q)=0, \\[2mm]
Y^{\prime}+X(2X+2\cos\sigma-q)=0,
\end{array}\right.
\end{equation}
together with 
\begin{equation}
\theta^{\prime}+\frac{x^\prime}{2y}=\cos\sigma.
\end{equation}
It should be remarked that the system \eqref{KM} is nothing but 
the K{\"a}hler magnetic curves in $\mathbb{H}^{2}(-4)$ with strength 
$\bar{q}=q-2\cos\sigma$.

\begin{example}[Reeb flows]{\rm 
According to item a) of 
Theorem \ref{magnSasaki}, Reeb flows are magnetic curves.
Choose $\theta=0$ or $\pi$ in the magnetic equations, we have 
$x(s)=\mathrm{constant}$ and
\linebreak $y(s)=\mathrm{constant}$. The coordinate $\theta$ is 
determined by $\theta'=\pm1$. 
Hence $\theta$ is an affine function of $s$.}
\end{example}

\begin{example}[Legendre $\varphi$-curves]{\rm 
According to item c) of Theorem \ref{magnSasaki}, 
Legendre $\varphi$-curves with $\kappa_1=|q|$ and 
$\kappa_2=1$ are magnetic curves. The magnetic curve 
$\gamma(s)$ is a horizontal lift 
of a Riemannian circle 
$\beta(s)=(x(s),y(s))$ with $|\kappa_\beta|=|q|$. The third 
coordinate $\theta(s)$ is determined by the 
horizontal lift condition (Legendre condition):
\[
\theta^{\prime}(s)=-\frac{x^{\prime}(s)}{2y(s)}
\]
under the prescribed initial condition.

To look for periodic trajectories, we 
restrict our attention to horizontal lifts of closed Riemannian circles.
See also \cite[Example 5.5]{Kajigaya}.

For $|\kappa_\beta|>2$, $\beta(s)$ is a closed circle 
parametrized as (see Appendix \ref{sectionA2}):
\[
(x(s),y(s))=
\left(
r\sin \mu(s)+x_0, r\left(
\frac{|q|}{2}-\cos\mu(s)
\right)
\right),
\]
where $r$ is a positive constant and $\mu(s)$ is a solution 
of the following ODE
\[
\mu^{\prime}(s)=
|q|-2\cos\mu(s).
\]
Under the initial condition $\mu(0)=0$, the solution $\mu(s)$ is given 
explicitly by
\[
\tan\frac{\mu(s)}{2}=\sqrt{\frac{|q|-2}{|q|+2}}\tan\frac{\sqrt{q^2-4}\>s}{2},
\] 
which implies
$$
\sin \mu(s)=
\frac{\sqrt{q^2-4}\sin(\sqrt{q^2-4}\>s)}{|q|+2\cos(\sqrt{q^2-4}\>s)},
\quad
\cos \mu(s)=
\frac{2+|q|\cos(\sqrt{q^2-4}\>s)}{|q|+2\cos(\sqrt{q^2-4}\>s)}.
$$
Thus $\beta(s)$ has the fundamental period $\mathsf{T}=2\pi/\sqrt{q^2-4}$.
The $\theta$-coordinate is given by 
\[
\theta(s)=\frac{1}{2} \mu(s)-\frac{|q|}{2}s,
\]
under the initial condition $\theta(0)=0$.

The horizontal lift is closed if and only if 
there exists a positive integer $m$ such that
\[
\theta\left(
s+\frac{2m\pi}{\sqrt{q^2-4}}\right)
\equiv \theta(s) \ \mod 2\pi.
\] 
Hence, the periodicity condition is equivalent to
\[
|q|=\frac{2}{\sqrt{1-(m/k)^2}}
\]
for some relatively prime positive integers $m$ and
$k$ satisfying $m/k<1$. This is precisely the criterion
found by Kajigaya in \cite{Kajigaya}.
Thus there exist countably many closed Legendre magnetic curves in $\mathrm{SL}_2\mathbb{R}$.
}
\end{example}

In the following we draw some pictures for a better understanding of
periodic Legendre magnetic curves in $\mathrm{SL}_2\mathbb{R}$.

From the previous computations we have
$$
\mu(s)=2\arctan \left(\sqrt{\frac{|q|-2}{|q|+2}}\tan\frac{\sqrt{q^2-4}\>s}{2}\right)+2h\pi,
$$
if $s\in\left(-\frac{\mathsf{T}}{2}\>,\>\frac{\mathsf{T}}{2}\right)+h\mathsf{T}$, where $h\in\mathbb{Z}$.

Fix the integers $m$ and $k$ as in the periodicity condition. We are looking now for a positive integer
$h$ such that 
$
\theta\left(\frac{\mathsf{T}}{2}+h\mathsf{T}\right)\equiv\theta\left(-\frac{\mathsf{T}}{2}\right)\quad (\text{mod }2\pi).
$
This means that $\gamma$ has $(h+1)$ "branches" to be periodic. The condition is equivalent to
$(h+1)\left(1-\frac{k}{m}\right)$ is an even number.

In the following we give some examples and draw the corresponding pictures on $\mathrm{SL}_2\mathbb{R}$
thought as a solid torus $\mathbb{S}^1\times\mathbb{D}^2$. The pictures are drawn up to a homothetic
deformation of the circle $\mathbb{S}^1$. Here $\mathbb{D}^2$ is obtained from the Poincar\'{e} half
plane $\mathbb{H}^2$ via the Cayley transformation
$$
f:\mathbb{H}^2\rightarrow\mathbb{D}^2,~
f(z)=\frac{z-i}{z+i}\>,\ \text{where}\ 
z\in\mathbb{C},~
\Im{m}(z)>0.
$$

Every figure in the next three examples is composed by four images: 
\begin{itemize}
\item the first one represents the curve $\beta$ represented in the upper half-plane;
\item the second one represents the same curve $\beta$ in the unit disc $\mathbb{D}^2$;
\item the last two pictures represent the same curve $\gamma$ on the solid torus 
$\mathbb{S}^1\times\mathbb{D}^2$ from different viewpoints.
\end{itemize}

\begin{figure}[tbh]
	\includegraphics[height=30mm]{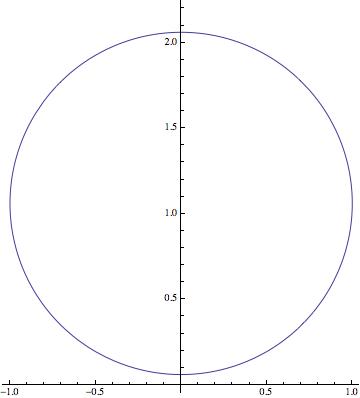}
	\quad
	\includegraphics[height=30mm]{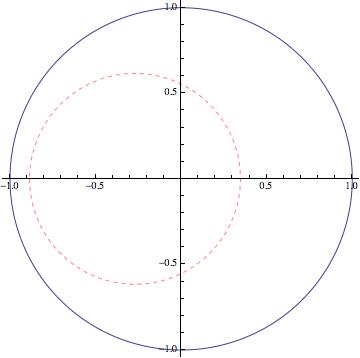}
	\quad
	\includegraphics[height=40mm]{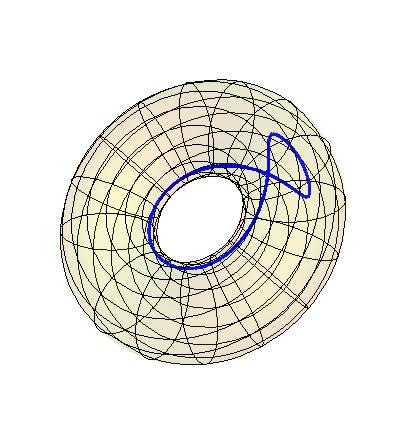}
	\quad
	\includegraphics[height=40mm]{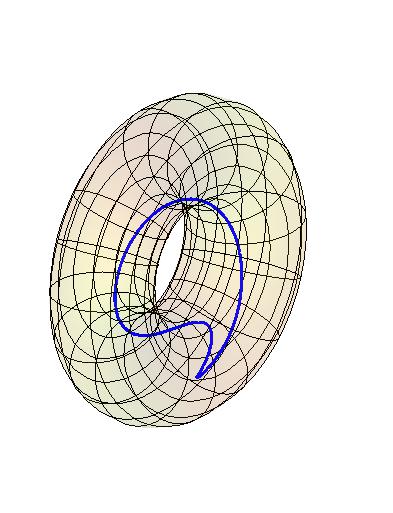}	
	\caption{ 
		$m=1$, $k=3$, $h=0$
	}
	\label{fig-L1}
\end{figure}

\begin{figure}[tbh]
	\includegraphics[height=30mm]{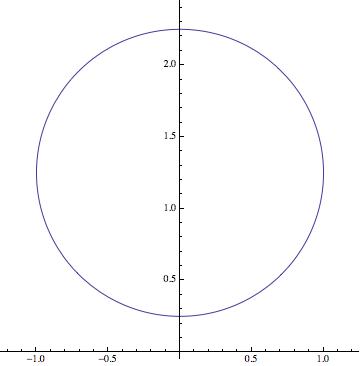}
	\quad
	\includegraphics[height=30mm]{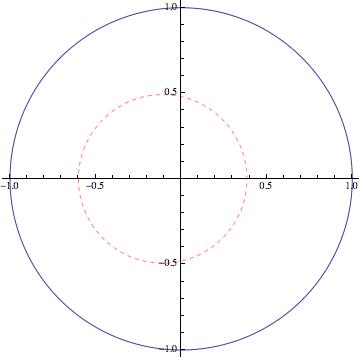}
	\quad
	\includegraphics[height=40mm]{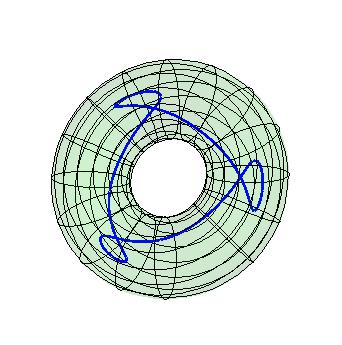}
	\quad
	\includegraphics[height=40mm]{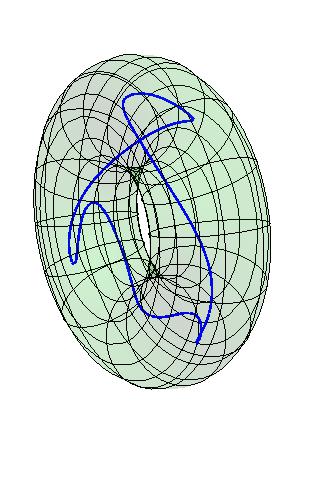}	
	\caption{ 
		$m=3$, $k=5$, $h=2$
	}
	\label{fig-L2}
\end{figure}

\begin{figure}[tbh]
	\includegraphics[height=30mm]{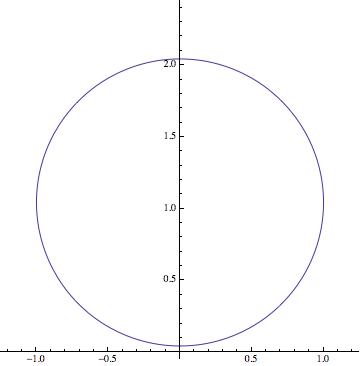}
	\quad
	\includegraphics[height=30mm]{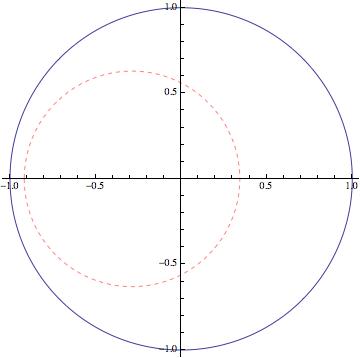}
	\quad
	\includegraphics[height=42mm]{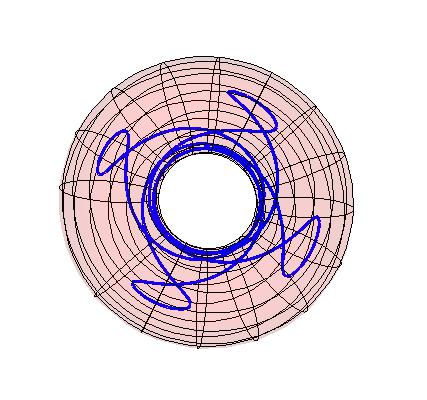}
	\quad
	\includegraphics[height=42mm]{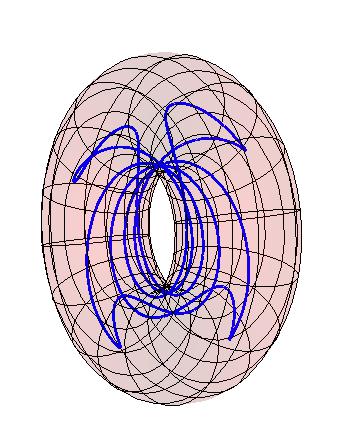}	
	\caption{ 
		$m=2$, $k=7$, $h=3$
	}
	\label{fig-L3}
\end{figure}

\FloatBarrier

\medskip


Our main interest is to classify periodic trajectories. Therefore, we consider 
magnetic curves whose projection images are closed circles.
Nevertheless, we are also interested in other kinds of magnetic trajectories
like contact magnetic trajectories over horocycles.
This study will be done in the next section. 

In the following we study periodicity of contact magnetic curves which are 
neither Reeb, nor Legendre. With this aim in view, we need to solve the system
\[
X^{\prime}-Y(2X-\bar{q})=0,
\ \
Y^{\prime}+X(2X-\bar{q})=0,
\ \
\theta^{\prime}+X=\cos\sigma,
\]
where we put $\bar{q}:=q-2\cos\sigma$.

Since $X^2+Y^2=\sin^2\sigma$, we represent $X$ and $Y$ as
\[
X=\sin\sigma\>\cos U,\ \ 
Y=\sin\sigma\>\sin U,
\]
for a certain function $U$.
Then we have
\[
X^{\prime}=-(\sin\sigma\sin U)\>U^{\prime},
\ \ 
Y^{\prime}=(\sin\sigma\cos U)\>U^{\prime},
\]
The first equation of the system is
\[
-(\sin\sigma\sin U)\>U^{\prime}-\sin\sigma\sin U
(2\sin\sigma\cos U-\bar{q})=0
\]
As $\gamma$ is neither Reeb, nor Legendre, we assume that $\sin\sigma\not=0$ and 
$\cos\sigma\not=0$, so 
\[
\{U^{\prime}+
(2\sin\sigma\cos U-\bar{q})\}\sin U=0
\]
The second equation becomes
\[
\{U^{\prime}+
2\sin\sigma \cos U-\bar{q})\}\cos U=0.
\]
Hence we obtain
\begin{equation}
\label{ODE:U}
U^{\prime}+2\sin\sigma \cos U-\bar{q}=0.
\end{equation}

This ODE can be solved directly. With the new variable $t=\tan (U/2)$, 
the equation~\eqref{ODE:U} can be rewritten as
\[
2\frac{dt}{ds}
=(\bar{q}+2\sin\sigma)t^2+(\bar{q}-2\sin\sigma).
\]

We have to distinguish several cases:

{\bf Case 1.} $\bar{q}+2\sin\sigma=0$

This is equivalent to $q=2\sqrt{2}\sin(\sigma-\pi/4)$. 
Under the initial condition $U(0)=0$, we get $t(s)=-2s\sin\sigma$. Thus we obtain
\[
U(s)=-2\arctan\left(2s\sin\sigma\right).
\]

{\bf Case 2.} $\bar{q}-2\sin\sigma=0$

In this case, we have $q=2\sqrt{2}\sin(\sigma+\pi/4)$ and 
$\mathrm{d}t=(2\sin\sigma)t^2\>\mathrm{d}s$.
The solution of this ODE with initial condition $U(0)=\pi/2$ is
$t(s)=\frac{1}{1-2s\sin\sigma}$.
Thus 
\[
U(s)=2\arctan\frac{1}{1-2s\sin\sigma}~.
\]

{\bf Case 3.} $\bar{q}^2-4\sin^2\sigma>0$

We need to solve the ODE:
$\displaystyle
\frac{dt}{ds}=
\frac{\bar{q}+2\sin\sigma}{2}
\left(
t^2+
\frac{\bar{q}-2\sin\sigma}
{\bar{q}+2\sin\sigma}
\right),
$
where 
$\frac{\bar{q}-2\sin\sigma}{\bar{q}+2\sin\sigma}>0$.
Solving this equation with the initial condition $U(0)=0$, 
we obtain
\[
U(s)=2\arctan
\left(
\sqrt{\frac{\bar{q}-2\sin\sigma}
{\bar{q}+2\sin\sigma}}
\tan
\frac{s\>\sqrt{\bar{q}^2-4\sin^2\sigma}}{2}
\right).
\]

{\bf Case 4.} $\bar{q}^2-4\sin^2\sigma<0$

We need to solve the ODE:
$\displaystyle
\frac{dt}{ds}=
\frac{2\sin\sigma+\bar{q}}{2}
\left(
t^2-
\frac{2\sin\sigma-\bar{q}}
{2\sin\sigma+\bar{q}}
\right),$
where 
$\frac{2\sin\sigma-\bar{q}}{2\sin\sigma+\bar{q}}>0$.
Setting the initial condition $U(0)=0$, we find
\[
U(s)=-2\arctan
\left(
\sqrt{
\frac{2\sin\sigma+\bar{q}}{2\sin\sigma-\bar{q}}
}
\tanh\frac{s\>\sqrt{4\sin^2\sigma-\bar{q}^2}}{2}
\right).
\]

\medskip

To look for closed trajectories, we need to demand that 
$|\bar{q}|=|q-2\cos\sigma|>2$. This condition immediately implies that cases 1, 2
and 4 cannot occur.

Let $\gamma(s)=(x(s),y(s),\theta(s)$ be a 
periodic contact magnetic curve which is neither Reeb nor 
Legendre. 
Let us denote by $\mathsf{T}$ the fundamental period of a 
periodic contact magnetic curve 
$\gamma(s)=(x(s),y(s),\theta(s))$.
Namely 
\[
x(s+\mathsf{T})=x(s), \ \ 
y(s+\mathsf{T})=y(s)\quad
\text{and} \quad
\theta(s+\mathsf{T})\equiv\theta(s) \mod 2\pi.
\]
The projected curve of 
$\beta(s)=(x(s),y(s))$ is 
is a closed Riemannian circle determined by
\[
\frac{x^{\prime}(s)}{2y(s)}=\sin\sigma\>\cos U(s),
\ \ 
\frac{y^{\prime}(s)}{2y(s)}=\sin\sigma\>\sin U(s).
\]
The second equation implies that 
\[
\frac{\mathrm{d}}{\mathrm{d}s}\log y(s)=2\sin\sigma\>\sin 
U(s).
\]
Since 
\[
\frac{\mathrm{d}U}{\mathrm{d}s}=-2\sin\sigma \>\cos U+\bar{q},
\]
we have
\[
\log y(s)=
\int 
\frac{2\sin\sigma \sin U}
{\bar{q}-2\sin\sigma\>\cos U}
\>\mathrm{d}U
=\log|\bar{q}-2\sin\sigma\>\cos U(s)|+\mathrm{constant}.
\] 
Thus we obtain
\[
y(s)=\bar{r}\big(\bar{q}-2\sin\sigma\>\cos U(s)\big)
\]
for some $\bar{r}\not=0$.
From this we have
\[
x^{\prime}(s)=
(2\sin\sigma)\cos U(s)\cdot
(\bar{r}U^{\prime}(s)),
\]
which leads to
\[
x(s)=\int(2\bar{r}\sin\sigma)\cos U\>\mathrm{d}U
=(2\bar{r}\sin\sigma)\sin U+x_0.
\] 
We notice that $\bar{s}=(\sin\sigma)s$ is 
the arclength parameter of $\beta$. 
The curvature of $\beta$ is $\kappa_\beta=\frac{\bar{q}}{\sin\sigma}$.

The $\theta$-coordinate is 
determined by 
\[
\theta^{\prime}(s)=\cos\sigma-\frac{x^{\prime}(s)}{2y(s)}=
\cos\sigma-\sin\sigma\cos U(s).
\]
Since $U'(s)=-(2\sin\sigma)\cos U(s)+\bar{q}$, 
we get
\[
\theta'(s)=\cos\sigma-\frac{\bar{q}}{2}+\frac{U'(s)}{2}.
\]
Thus, the solution satisfying $\theta(0)=\theta_0$, is given by 
\[
\theta(s)=
\left(
\cos\sigma-\frac{\bar{q}}{2}
\right)
s
+\frac{U(s)}{2}+\theta_0.
\]

The periodicity of $x(s)$ and $y(s)$ implies that
\[\begin{array}{l}
x(s+\mathsf{T})-x(s)=
\ 2\bar{r}\sin\sigma\>\big[(\sin U(s+\mathsf{T})-\sin U(s)\big],\\[2mm]
y(s+\mathsf{T})-y(s)=
-2\bar{r}\sin\sigma\>\big[\cos U(s+\mathsf{T})-\cos U(s)\big],
\end{array}
\]
for all $s$. These formulas yield 
$$\sin U(s+\mathsf{T})=\sin U(s)
\quad \text{and}\quad
\cos U(s+\mathsf{T})=\cos U(s),\quad \text{for all } s.
$$
Thus we obtain
\[
U(s+\mathsf{T})\equiv U(s)
\mod 2\pi.
\]
Under the hypothesis $U(0)=0$, we have 
\begin{equation}\label{eq:per4}
U(\mathsf{T})=2k\pi
\end{equation}
for some integer $k$.
Since 
\begin{equation}\label{eq:per5}
\tan\frac{U(s)}{2}=\sqrt{\frac{\bar{q}-2\sin\sigma}{\bar{q}+2\sin\sigma}}
\tan \left(
\frac{\sqrt{\bar{q}^2-4\sin^2\sigma}}{2}\>s
\right),
\end{equation}
we have 
\[
\tan\left(\frac{\>\mathsf{T}\sqrt{\bar{q}^2-4\sin^2\sigma}}{2}
\right)=0.
\]
This is equivalent to 
\[
\frac{\>\mathsf{T}\sqrt{\bar{q}^2-4\sin^2\sigma}}{2}=m\pi
\]
for some integer $m$.

The periodicity of $\theta(s)$ implies 
\begin{equation}\label{eq:per3}
\theta(s+\mathsf{T})-\theta(s)
=\left(
\cos\sigma-\frac{\bar{q}}{2}
\right)\>\mathsf{T}
+\frac{U(s+\mathsf{T})-U(s)}{2}\equiv 0 \mod 2\pi.
\end{equation}

From \eqref{eq:per4} and \eqref{eq:per3}, 
we get
\[
\left(\cos\sigma-\frac{\bar{q}}{2}\right)
\mathsf{T}+k\pi\equiv 0 \mod 2\pi.
\]
which leads
\begin{equation}\label{eq:per6}
\mathsf{T}=\frac{2k\pi}{\bar{q}-2\sin\sigma},
\end{equation}
(possible for other integer $k$).
From \eqref{eq:per5} and \eqref{eq:per6}, 
we find
\[
\sqrt{\bar{q}^2-4\sin^2\sigma}=\frac{m}{k}(\bar{q}-2\cos\sigma).
\]
Solving this equation we have 
\begin{equation}
\label{eq:q}
q=\frac{2a\cos\sigma\pm\sqrt{2(1-a\cos(2\sigma))}}{\frac{1+a}{2}},
\end{equation}
where 
\begin{equation}
a=1-2\left(
\frac{m}{k}
\right)^2.
\label{eq:a}
\end{equation}

\medskip

We now state the following result.

\medskip

\begin{theorem}
The set of all periodic magnetic curves on the special linear group
$\mathrm{SL}_2\mathbb{R}$ can be quantized in the set of rational numbers.
\end{theorem}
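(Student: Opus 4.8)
The plan is to extract the ``quantization'' directly from the periodicity conditions already derived, organizing the three families of periodic trajectories (Reeb, Legendre, and generic slant) and showing that in each case the defining data are governed by a pair of relatively prime integers $(m,k)$.

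First I would observe that the Reeb flows are trivially periodic with no constraint, so the interesting content lies in the other two families. For the Legendre case, the computation already in the excerpt shows that a horizontal lift of a closed Riemannian circle with $|\kappa_\beta|=|q|>2$ closes up precisely when
\[
|q|=\frac{2}{\sqrt{1-(m/k)^2}}
\]
for relatively prime positive integers $m<k$; here the pair $(m,k)$ records the number of times $\gamma$ winds in the fibre direction versus the base direction. This is the ``quantization in $\mathbb{Q}$'' statement for Legendre curves: the invariant $m/k\in\mathbb{Q}\cap(0,1)$ determines, and is determined by, the periodic trajectory up to the free parameters ($r$, $x_0$, $\theta_0$, contact angle fixed at $\pi/2$).

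Next, for the generic slant case (neither Reeb nor Legendre), I would assemble equations \eqref{eq:per4}, \eqref{eq:per5}, \eqref{eq:per6} into the single relation
\[
\sqrt{\bar{q}^2-4\sin^2\sigma}=\frac{m}{k}\,(\bar{q}-2\cos\sigma),\qquad \bar{q}=q-2\cos\sigma,
\]
with $m,k$ relatively prime integers coming from the closing conditions on $U$ and on $\theta$. Solving for $q$ yields \eqref{eq:q}--\eqref{eq:a}, so that $q$ is expressed rationally in terms of $\cos\sigma$ and $a=1-2(m/k)^2$. The key point to make explicit is that $a$ — equivalently the ratio $m/k$ — is a rational number, and that it is exactly the ``winding data'' of the closed curve: $k$ counts the period of $U$ modulo $2\pi$ and $m$ the number of half-periods of the $\tan$ in \eqref{eq:per5} within one fundamental period $\mathsf{T}$. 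Together with the Reeb and Legendre cases this exhausts the list of periodic contact magnetic curves, so every periodic contact magnetic curve carries a well-defined rational invariant $m/k$, and conversely each admissible rational value (subject to $|\bar q|>2$, which as noted rules out Cases 1, 2, 4) is realized.

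The main obstacle I anticipate is \emph{bookkeeping the two integers consistently}: the integer $k$ appearing in \eqref{eq:per4} (closing of $U$) and the integer $k$ in \eqref{eq:per6} (closing of $\theta$) are \emph{a priori} different, and one must check they can be taken coprime to the corresponding $m$ after clearing common factors, and that the sign choices ($\varepsilon=\pm$ in \eqref{eq:q}) and the requirement $\sin\sigma\neq0$, $\cos\sigma\neq0$ are compatible with the inequality $|\bar q|>2$ needed for Case 3. Once this is disentangled, the statement follows: the correspondence ``periodic contact magnetic curve $\mapsto m/k$'' gives the desired quantization in $\mathbb{Q}$, and — as remarked in the introduction — these curves are realized as torus knots in the solid torus $\mathrm{SL}_2\mathbb{R}\cong\mathbb{D}\times\mathbb{S}^1$, the pair $(m,k)$ being the knot type.
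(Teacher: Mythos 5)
Your proposal follows essentially the same route as the paper: the theorem is proved there simply by citing equations \eqref{eq:q} and \eqref{eq:a}, i.e.\ by the very periodicity analysis (closing conditions on $U$ and $\theta$ combined into $\sqrt{\bar{q}^2-4\sin^2\sigma}=\tfrac{m}{k}(\bar{q}-2\cos\sigma)$) that you reproduce, together with the Legendre case treated earlier as Kajigaya's criterion. Your extra care about reconciling the two integers $k$ from \eqref{eq:per4} and \eqref{eq:per6} and the sign/range constraints is a reasonable refinement of the same argument, not a different approach.
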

\proof The proof is a consequence of equations \eqref{eq:q} and \eqref{eq:a}.
\endproof

\begin{remark}{\rm 
When $\cos\sigma=0$, the strength $q$ has the form
\[
q=\frac{\pm 2}{\sqrt{1-(m/k)^2}}.
\]
This expression coincides with the Kajigaya's criterion.}
\end{remark}

\medskip

In the following we draw some pictures of periodic non-Reeb and 
non-Legendre magnetic curves in $\mathrm{SL}_2\mathbb{R}$.
Every figure in the next three examples is composed by four images,
keeping the same convention as before.

\begin{figure}[tbh]
	\includegraphics[height=30mm]{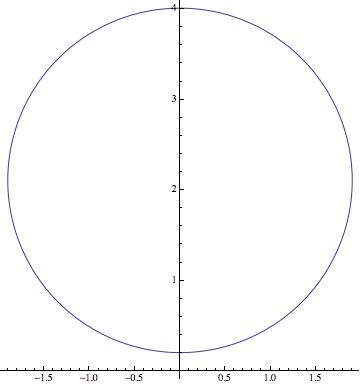}
	\quad
	\includegraphics[height=30mm]{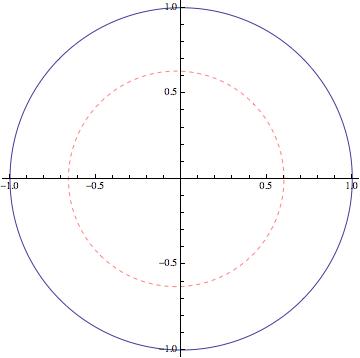}
	\quad
	\includegraphics[height=40mm]{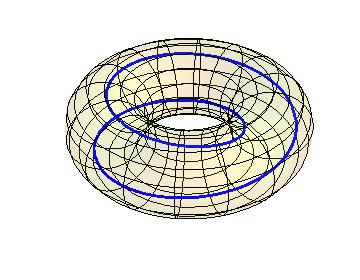}
	\quad
	\includegraphics[height=40mm]{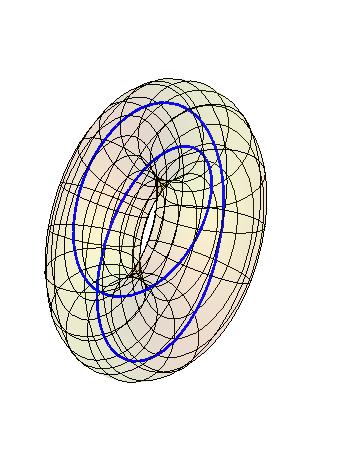}	
	\caption{ 
		$m=1$, $k=3$, $\sigma=\frac{2\pi}{5}$
	}
	\label{fig-L1}
\end{figure}

\begin{figure}[tbh]
	\includegraphics[height=30mm]{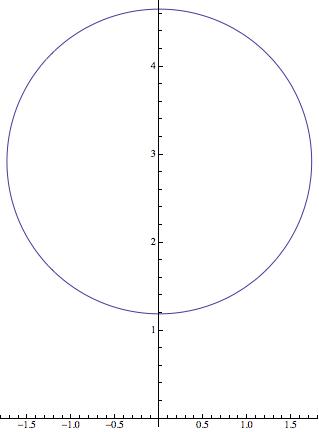}
	\quad
	\includegraphics[height=30mm]{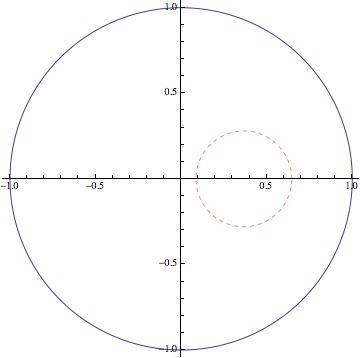}
	\quad
	\includegraphics[height=40mm]{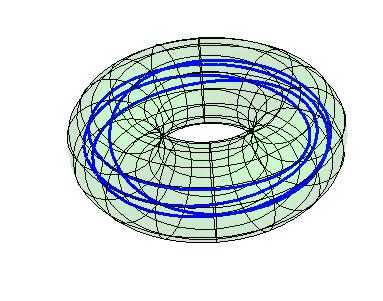}
	\quad
	\includegraphics[height=40mm]{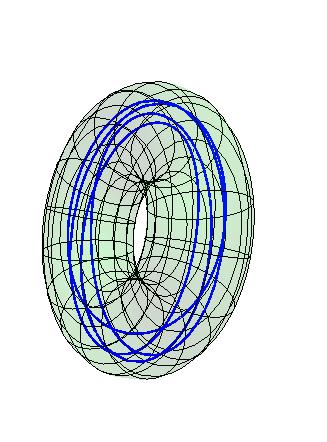}	
	\caption{ 
		$m=3$, $k=5$, $\sigma=\frac{\pi}{3}$
	}
	\label{fig-L2}
\end{figure}

\FloatBarrier

\begin{remark}{\rm 
As we have mentioned in Remark \ref{Remark1}, 
we may consider the one-parameter family of 
homogenous metrics $g_{\nu}$.
In \cite{DIMN2}, we have studied contact 
magnetic curves in cosymplectic manifolds. 
In particular, Nistor investigated 
contact magnetic curves in \cite{Nistor}. 
According to \cite{Nistor}, contact magnetic curves in 
$\mathbb{H}^{2}(-4)\times\mathbb{R}$ are classified as 
follows:
\begin{itemize}
\item  a geodesic line 
$(x_0,y_0,t_0\pm{s})$ through a point 
$(x_0,y_0,t_0)$.

\item 
a horocycle $\beta_0\times\{t_0\}$ 
in every point $t_0\in\mathbb{R}$, 
where $\beta_0$ denotes a (open) circle tangent
to the ideal boundary or a horizontal line in 
$\mathbb{H}^2(-4)$, of constant curvature 
$\kappa^2=4$;
\item  a non-degenerate cylindrical helix on 
$\beta\times\mathbb{R}$, 
where $\beta$ denotes both a Euclidean and
a hyperbolic circle in $\mathbb{H}^2(-4)$.
\end{itemize}
One can unify 
the results in this paper and those in \cite{Nistor}.
}
\end{remark}

\section{One-parameter subgroups} 
Let $G$ be a Lie group equipped with a 
left invariant Riemannian metric and let $\mathfrak{g}$ be its Lie algebra. 
Then an arclength parametrized curve $\gamma$ 
in $G$ is said to be \emph{homogenous} if 
there exists a one-parameter subgroup $\{\exp(tX)\}$ such that 
$\gamma(t)$ is expressed as $\gamma(t)=a\>\exp(tX)$ for some 
$a\in G$ and unit vector $X\in\mathfrak{g}$. 
In this section we investigate homogeneous magnetic trajectories.
 
\subsection{Homogeneous geodesics}
In Lie groups with bi-invariant Riemannian metric,
all the geodesics starting at the identity are
one parameter subgroups.
However, if the metric is only left invariant,
one-parameter subgroups are not necessarily
geodesics. Here we study geodesic in 
$\mathrm{SL}_2\mathbb{R}$ which are one-parameter subgroups 
of $\mathrm{SL}_2\mathbb{R}$.

\begin{proposition}
The one-parameter subgroup $\exp(tX)$ of an element
\[
X=\left(
\begin{array}{cc}
X_{11} & X_{12}\\
X_{21} & -X_{11}
\end{array}
\right)
\]
of $\mathfrak{sl}_{2}\mathbb{R}$ is 
given explicitly  by as follows{\rm:}
\begin{itemize}
\item If $\det X=0$, then 
\[
\exp(tX)=\left(
\begin{array}{cc}
1+tX_{11} & tX_{12}\\
tX_{21}& 1-tX_{11}
\end{array}
\right).
\]
Every $\exp(tX)$ induces a parabolic transformation on 
$\mathbb{H}^2(-4)$.  
\item $\det X=\delta^2>0$, then 
\[
\exp(tX)=\left(
\begin{array}{cc}
\cos(\delta t)+\frac{X_{11}}{\delta}\sin(\delta t)
 & \frac{X_{12}}{\delta}\sin(\delta t)
 \\
 \frac{X_{21}}{\delta}\sin(\delta t)
 & 
\cos(\delta t)-\frac{X_{11}}{\delta}\sin(\delta t)
\end{array}
\right).
\]
Every $\exp(tX)$ induces an elliptic transformation on 
$\mathbb{H}^2(-4)$.  
\item $\det X=-\delta^2<0$
\[
\exp(tX)=\left(
\begin{array}{cc}
\cosh(\delta t)+\frac{X_{11}}{\delta}\sinh(\delta t)
 & \frac{X_{12}}{\delta}\sinh(\delta t)
 \\
 \frac{X_{21}}{\delta}\sinh(\delta t)
 & 
\cosh(\delta t)-\frac{X_{11}}{\delta}\sinh(\delta t)
\end{array}
\right).
\]
Every $\exp(tX)$ induces a hyperbolic transformation on 
$\mathbb{H}^2(-4)$.  
\end{itemize}
\end{proposition}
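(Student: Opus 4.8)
The plan is to exploit the Cayley--Hamilton theorem for traceless $2\times2$ matrices. Since $\operatorname{tr}X=0$, the characteristic polynomial of $X$ is $\lambda^{2}+\det X$, so
\[
X^{2}=-(\det X)\,\mathrm{Id}.
\]
Feeding this relation into the exponential series $\exp(tX)=\sum_{n\ge0}t^{n}X^{n}/n!$ and separating even from odd powers reduces the whole computation to the scalar Taylor series of $\cos,\sin$ (or $\cosh,\sinh$, or, when $\det X=0$, to the truncated series $\mathrm{Id}+tX$). Note also that $\operatorname{tr}X=0$ gives $\det\exp(tX)=e^{t\operatorname{tr}X}=1$, so $\exp(tX)\in\mathrm{SL}_2\mathbb{R}$.

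First I would dispatch the three cases. If $\det X=0$, then $X^{2}=0$, hence $\exp(tX)=\mathrm{Id}+tX$, which is the first displayed matrix after substituting the entries of $X$. If $\det X=\delta^{2}>0$ ($\delta=\sqrt{\det X}$), then $X^{2k}=(-\delta^{2})^{k}\mathrm{Id}$ and $X^{2k+1}=(-\delta^{2})^{k}X$, so
\[
\exp(tX)=\Bigl(\sum_{k\ge0}\frac{(-1)^{k}(\delta t)^{2k}}{(2k)!}\Bigr)\mathrm{Id}
+\frac{1}{\delta}\Bigl(\sum_{k\ge0}\frac{(-1)^{k}(\delta t)^{2k+1}}{(2k+1)!}\Bigr)X
=\cos(\delta t)\,\mathrm{Id}+\frac{\sin(\delta t)}{\delta}\,X,
\]
which gives the second matrix. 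The case $\det X=-\delta^{2}<0$ is identical with $\cos,\sin$ replaced by $\cosh,\sinh$, yielding the third matrix. All of this is a routine verification once the Cayley--Hamilton identity is in place.

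It remains to identify the induced linear fractional transformations on $\mathbb{H}^{2}(-4)$, and here I would simply read off the trace of $\exp(tX)$: it equals $2$ in the first case, $2\cos(\delta t)$ in the second, and $2\cosh(\delta t)$ in the third. Invoking the Remark that classifies a linear fractional transformation according to the size of $|a+d|$, these correspond to parabolic, elliptic, and hyperbolic transformations respectively. The only genuinely delicate point — and the one I would flag explicitly — is the borderline behaviour excluded by that Remark: in the elliptic case $\exp(tX)=\pm\mathrm{Id}$ exactly when $\delta t\in\pi\mathbb{Z}$, and in the parabolic case $\exp(tX)=\mathrm{Id}$ when $t=0$ (or for all $t$ when $X=0$); at those parameter values $\exp(tX)$ acts trivially on $\mathbb{H}^{2}(-4)$ and falls outside the trichotomy. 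So the assertions about parabolic/elliptic/hyperbolic type are to be understood for the parameter values with $\exp(tX)\neq\pm\mathrm{Id}$, and I would add a clause to this effect.
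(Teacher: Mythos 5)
Your proof is correct and is essentially the standard computation the paper leaves implicit (the proposition is stated without proof): Cayley--Hamilton for a traceless $2\times2$ matrix gives $X^{2}=-(\det X)\,\mathrm{Id}$, and splitting the exponential series into even and odd parts yields exactly the three displayed formulas. Your explicit caveat about the parameter values where $\exp(tX)=\pm\mathrm{Id}$ (so that the elliptic/parabolic/hyperbolic trichotomy does not apply) is a sensible refinement, consistent with the paper's own Remark which excludes $\pm\mathrm{Id}$ from that classification.
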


Take an element $X\in
\mathfrak{sl}_{2}\mathbb{R}$, then 
the acceleration vector field $\nabla_{\gamma^\prime}\gamma^{\prime}$ of $\gamma(t)=\exp(tX)$
at the origin is ${\mathsf{U}}(X,X)$ because of \eqref{LC}. 
Thus we obtain the following well known criterion:

\begin{proposition}
A one parameter subgroup
$\{ \exp (t X) \}_{t \in \mathbb{R}},\ X \in 
\mathfrak{sl}_{2}\mathbb{R}$
is a geodesic if and only if
${\mathsf{U}}(X,X)=0$. 
\end{proposition}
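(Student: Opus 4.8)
The plan is to exploit the left invariance of both the metric $g$ and the vector field generating the one-parameter subgroup. First I would observe that $\gamma(t)=\exp(tX)$ is precisely the integral curve through the identity of the left invariant vector field determined by $X\in\mathfrak{sl}_{2}\mathbb{R}$; consequently $\gamma^{\prime}(t)=X_{\gamma(t)}$ for all $t$, and in particular $\gamma$ has constant speed $|\gamma^{\prime}|=|X|$ since the norm of a left invariant vector field is constant. Thus $\gamma$ is a geodesic if and only if its covariant acceleration $\nabla_{\gamma^{\prime}}\gamma^{\prime}$ vanishes identically.

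Next, since $X$ is left invariant and $\nabla$ is the Levi-Civita connection of a left invariant metric, the field $\nabla_X X$ is again left invariant; evaluating along $\gamma$ gives $\nabla_{\gamma^{\prime}}\gamma^{\prime}=(\nabla_X X)_{\gamma(t)}$. I would then invoke the decomposition \eqref{LC} of the Levi-Civita connection, namely $\nabla_X Y=\frac{1}{2}[X,Y]+{\mathsf{U}}(X,Y)$. Taking $Y=X$ and using $[X,X]=0$ yields $\nabla_X X={\mathsf{U}}(X,X)$, so that $\nabla_{\gamma^{\prime}}\gamma^{\prime}$ is the left translate of the constant element ${\mathsf{U}}(X,X)\in\mathfrak{sl}_{2}\mathbb{R}$; this recovers the assertion in the paragraph preceding the statement that the acceleration at the origin equals ${\mathsf{U}}(X,X)$.

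Finally, because left translations are isometries and hence preserve $\nabla$, the left invariant field $\nabla_X X$ vanishes at one point if and only if it vanishes everywhere; therefore $\nabla_{\gamma^{\prime}}\gamma^{\prime}\equiv 0$ precisely when ${\mathsf{U}}(X,X)=0$. Combining this with the first step gives the desired equivalence. There is no genuinely hard step here: the only point deserving a moment's care is confirming that $\gamma$ is an honest geodesic rather than merely a reparametrization of one, which is settled by the constant-speed remark above; the rest is a direct application of \eqref{LC} together with left invariance.
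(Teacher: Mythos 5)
Your argument is correct and follows the same route as the paper: identify $\gamma'(t)$ with the left invariant field generated by $X$, apply the decomposition \eqref{LC} with $[X,X]=0$ to get $\nabla_X X={\mathsf{U}}(X,X)$, and conclude by left invariance. You have merely spelled out the constant-speed and left-translation details that the paper leaves implicit.
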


Now we apply this criterion for 
$X=aE_{1}+bE_{2}+cE_{3} \in \mathfrak{sl}_2\mathbb{R}$. 
By using \eqref{U-tensor}, ${\mathsf{U}}(X,X)$ is computed as
\[
{\mathsf{U}}(X,X)=2c(b-a)E_{1}+2c(b-a)E_{2}+2(a^{2}-b^{2})E_{3}.
\]
Thus we obtain 
\begin{corollary}
A one-parameter subgroup $\{\exp(tX)\}$ of 
$X=aE_1+bE_2+cE_3$ is a geodesic in 
$\mathrm{SL}_2\mathbb{R}$ if and only if either 
\begin{itemize}
\item $a=b$ or 
\item $c=0$ and $a=-b$.
\end{itemize}
\end{corollary}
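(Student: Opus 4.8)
The plan is to read this off directly from the geodesic criterion of the preceding proposition together with the explicit value of $\mathsf{U}(X,X)$ just computed. For $X=aE_1+bE_2+cE_3$ the proposition says that $\{\exp(tX)\}$ is a geodesic precisely when $\mathsf{U}(X,X)=0$, and we have already recorded
\[
\mathsf{U}(X,X)=2c(b-a)E_1+2c(b-a)E_2+2(a^2-b^2)E_3 .
\]
Since $\{E_1,E_2,E_3\}$ is a basis of $\mathfrak{sl}_2\mathbb{R}$, this vanishes if and only if the two scalar equations $c(b-a)=0$ and $a^2-b^2=0$ hold simultaneously.

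Next I would solve this system by a short case analysis. The equation $a^2=b^2$ forces $a=b$ or $a=-b$. If $a=b$, then $c(b-a)=0$ is automatic, so $\exp(tX)$ is a geodesic for every value of $c$; this is the first alternative. If $a=-b$ with $a\neq b$, i.e.\ $a\neq 0$, then $b-a=-2a\neq 0$, so $c(b-a)=0$ forces $c=0$; this is the second alternative. The overlap case $a=b=0$ is already contained in the first alternative, so the two conditions together are exhaustive. For the converse direction I would simply substitute each listed condition into the displayed formula for $\mathsf{U}(X,X)$ and observe that both coefficients vanish, hence $\mathsf{U}(X,X)=0$ and the criterion of the proposition applies.

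Because the tensor $\mathsf{U}$ and the quantity $\mathsf{U}(X,X)$ have already been made explicit in the text, there is essentially no computational obstacle here; the only point requiring a little care is the bookkeeping of the case distinction, namely checking that the two alternatives are genuinely exhaustive and that their intersection $a=b=0$ is not accidentally excluded. Equivalently, the conclusion can be phrased as ``$a=b$, or else $a+b=0$ and $c=0$'', which is exactly the assertion of the corollary.
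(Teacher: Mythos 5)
Your proposal is correct and follows exactly the paper's route: apply the criterion $\mathsf{U}(X,X)=0$ to the already-computed expression $\mathsf{U}(X,X)=2c(b-a)E_1+2c(b-a)E_2+2(a^2-b^2)E_3$ and solve the resulting two scalar equations. The paper leaves the case analysis implicit, so your explicit bookkeeping of the alternatives $a=b$ versus $a=-b$, $c=0$ is the only (welcome) addition.
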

In particular 
$\exp(t E_{3})$ is the
only geodesic
among
$\exp(t E_{i})$, ($i=1,2,3$).
Here we describe the spurs
of $\exp(tE_{i})$.
Direct computations show the following formula:
\[
\exp (tE_1)=
\begin{pmatrix} 1 & \sqrt{2}t \\ 0 & 1
\end{pmatrix}.
\]
Thus the coordinate expression of $\exp(tE_1)$ is 
\[
x(t)=\sqrt{2}t,\ \ y=1,\ \ \theta(t)=0.
\]
Hence the spur of  
$\exp(tE_1)$ in the universal covering 
$\widetilde{\mathrm{SL}_2\mathbb{R}}=\mathbb{H}^2(-4)
\times\mathbb{R}$
is the line through $(0,1,0)$ 
parallel to the $x$-axis.
The contact angle of $\exp(tE_1)$ is $\pi/4$.

Note that for all $t \in \mathbb{R}$,
\begin{equation}
\label{Exp-E1}
\begin{pmatrix} 1 & x \\ 0 & 1
\end{pmatrix}=
\exp \Big(\frac{x}{\sqrt{2}}E_1\Big).
\end{equation}
Hence the mapping
\[
\exp\Big(\frac{{\scriptstyle \bullet}}{\sqrt{2}}E_1\Big):
(\mathbb{R}(x),+) \longrightarrow N
\]
is a Lie group isomorphism.

Next the trace of $\exp(tE_2)$ is given by
\begin{equation}
\label{Exp-E2}
\exp(tE_2)=
\begin{pmatrix}
1 & 0 \\ \sqrt{2}t & 1
\end{pmatrix}.
\end{equation}
This curve has the parametrization
\[
x(t)=\frac
{\sqrt{2}t}{1+2t^2}, \ \ 
y(t)=\frac{1}{1+2t^2},
\ \ 
\theta(t)=\arctan(-\sqrt{2}t)
\]
and contact angle $3\pi/4$.
The projected curve $(x(t),y(t))$ of 
$\exp(tE_2)$ in $\mathbb{H}^2(-4)$ is the horocycle
\[
x^2+\left(y-\frac{1}{2}\right)^2=\frac{1}{4}.
\]
These two one-parameter subgroups 
$\{\exp(tE_1)\}$ and $\{\exp(tE_2)\}$ are 
\textit{not} geodesics. However as we will see later, they are 
contact magnetic curves.

The one-parameter subgroup
\[
\exp(t E_3)=
\begin{pmatrix}
e^t & 0 \\ 0 & e^{-t}
\end{pmatrix}
\]
is a Legendre geodesic with parametrization
\[
x(t)=0,\ \ y(t)=e^t,\ \ \theta(t)=0.
\]
The spur 
of $\exp(tE_3)$ is the 
$y$-axis in the universal covering 
$\mathbb{H}^2(-4)\times\mathbb{R}$.
The projected curve $(x(t),y(t))$ 
is a vertical line, \textit{i.e.}, 
a horocycle with base point 
$\infty$. 

\begin{remark}{\rm
A homogeneous Riemannian manifold 
$M=G/K$ is called a 
\emph{space with homogeneous geodesics} or a 
\emph{Riemannian g.o.~space} if every geodesic 
$\gamma(t)$ of $M$ is an orbit of a one-parameter subgroup of $G$ \cite{KV}. 
Naturally reductive homogenous spaces are typical examples 
of Riemannian g.o. spaces. (For more informations, we refer to \cite{Av}).
}

\end{remark}

Explicit parametrization of geodesics 
in $\mathbb{H}^{2}(-1)\times\mathbb{R}$ have been obtained in 
\cite{Sitzia} (see also \cite{Profir}). 
Note that in \cite{Profir,Sitzia}, 
$\widetilde{\mathrm{SL}_2\mathbb{R}}$ is realized as
the product manifold of unit disk and the real line.

\subsection{Homogeneous magnetic trajectories}

Let us consider magnetic equation for one-parameter 
subgroups. 
First we prepare the following proposition.
\begin{proposition}
The endomorphism field $\varphi$ satisfies
\[
\varphi (aE_1+bE_2+cE_3)=
-\frac{c}{\sqrt{2}}(E_1+E_2)+\frac{1}{\sqrt{2}}(a+b)
E_3
\]
for any $X=aE_1+bE_2+cE_3\in\mathfrak{sl}_{2}
\mathbb{R}$.
\end{proposition}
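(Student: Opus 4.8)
The plan is to compute $\varphi$ on the left-invariant basis $\{E_1,E_2,E_3\}$ by routing through the orthonormal frame $\{\epsilon_1,\epsilon_2,\epsilon_3\}$, on which $\varphi$ is given explicitly. First I would rewrite the $E_i$ in terms of $E+F$, $E-F$, $H$: since $E_1=\sqrt2\,E$, $E_2=\sqrt2\,F$, $E_3=H$ and $E=\frac12\big((E+F)+(E-F)\big)$, $F=\frac12\big((E+F)-(E-F)\big)$, one gets
\[
E_1=\frac{1}{\sqrt2}\big((E+F)+(E-F)\big),\qquad E_2=\frac{1}{\sqrt2}\big((E+F)-(E-F)\big),\qquad E_3=H .
\]
Then, using the change-of-basis relation recalled above, namely $E+F=\cos(2\theta)\epsilon_1+\sin(2\theta)\epsilon_2$, $H=-\sin(2\theta)\epsilon_1+\cos(2\theta)\epsilon_2$ and $E-F=\epsilon_3$, I would substitute to express each $E_i$ as an explicit combination of the $\epsilon_j$.

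Next I would apply $\varphi$ term by term via $\varphi\epsilon_1=\epsilon_2$, $\varphi\epsilon_2=-\epsilon_1$, $\varphi\epsilon_3=0$. A short computation shows the $\theta$-dependent terms recombine cleanly:
\[
\varphi E_1=\frac{1}{\sqrt2}\big(-\sin(2\theta)\epsilon_1+\cos(2\theta)\epsilon_2\big)=\frac{1}{\sqrt2}H=\frac{1}{\sqrt2}E_3,\qquad \varphi E_2=\frac{1}{\sqrt2}E_3,
\]
while $\varphi E_3=-\big(\cos(2\theta)\epsilon_1+\sin(2\theta)\epsilon_2\big)=-(E+F)=-\frac{1}{\sqrt2}(E_1+E_2)$, where the last identity uses $E_1+E_2=\sqrt2(E+F)$. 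Extending by linearity then yields $\varphi(aE_1+bE_2+cE_3)=-\frac{c}{\sqrt2}(E_1+E_2)+\frac{a+b}{\sqrt2}E_3$, which is the claim.

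The only subtlety worth flagging is that $\{\epsilon_1,\epsilon_2,\epsilon_3\}$ is \emph{not} left-invariant, so one must genuinely carry the $2\theta$-rotation and check it cancels. Alternatively — and this shortens the bookkeeping — since $\varphi$ is a left-invariant tensor field, its restriction to $\mathfrak{sl}_2\mathbb{R}$ is a well-defined linear map independent of the base point, so it suffices to evaluate everything at $\theta=0$, where $E+F=\epsilon_1$, $H=\epsilon_2$, $E-F=\epsilon_3$, hence $E_1=\frac{1}{\sqrt2}(\epsilon_1+\epsilon_3)$, $E_2=\frac{1}{\sqrt2}(\epsilon_1-\epsilon_3)$, $E_3=\epsilon_2$; the computation is then immediate. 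I expect no real obstacle here: it is a finite linear-algebra check, and the only thing to be careful about is this left-invariance/frame issue.
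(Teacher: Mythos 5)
Your computation is correct, and it matches what the paper intends: the paper states this proposition without proof, treating it as a direct verification, and your route through the change-of-basis relation $(E+F,H,E-F)=(\epsilon_1,\epsilon_2,\epsilon_3)R_{2\theta}$ together with $\varphi\epsilon_1=\epsilon_2$, $\varphi\epsilon_2=-\epsilon_1$, $\varphi\epsilon_3=0$ is exactly the intended check. Your observation that the $2\theta$-rotation cancels (equivalently, that one may evaluate at the identity, where $\theta=0$, because $\varphi$ and the $E_i$ are left invariant) is the right point to flag, and in fact carrying the $\theta$-dependence explicitly, as you do, independently confirms the left invariance of $\varphi$ rather than presupposing it.
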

From this proposition, the magnetic equation 
$\nabla_{\gamma^{\prime}}\gamma^{\prime}=q\varphi\gamma^{\prime}$ for 
$\gamma(t)=\exp(tX)$ may be rewritten as
\[
2c(b-a)=-\frac{cq}{\sqrt{2}},\ \ 
2(a^2-b^2)=\frac{q}{\sqrt{2}}(a+b).
\]
\begin{proposition}
The one parameter subgroup  
$\exp(tX)$ of $X=aE_1+bE_2+cE_3
\in\mathfrak{sl}_{2}\mathbb{R}$ is 
a magnetic curve with strength $q\not=0$ 
if and only if $a-b=q/(2\sqrt{2})$.
\end{proposition}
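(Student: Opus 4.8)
The plan is to evaluate the Lorentz equation $\nabla_{\gamma'}\gamma'=q\varphi\gamma'$ for $\gamma(t)=\exp(tX)$ at a single point and turn it into scalar conditions on $a,b,c$. First I would note that the velocity field of the one-parameter subgroup $\exp(tX)$ is the restriction to $\gamma$ of the left-invariant vector field determined by $X\in\mathfrak{sl}_2\mathbb{R}$, and that $\nabla$ and $\varphi$ are left invariant; hence both sides of the Lorentz equation are left-invariant vector fields along $\gamma$, and the equation holds for all $t$ if and only if it holds at the identity. There, \eqref{LC} together with $[X,X]=0$ gives $\nabla_{\gamma'}\gamma'=\mathsf{U}(X,X)$, which by \eqref{U-tensor} equals $2c(b-a)(E_1+E_2)+2(a^2-b^2)E_3$ when $X=aE_1+bE_2+cE_3$, while the preceding Proposition gives $\varphi X=-\tfrac{c}{\sqrt{2}}(E_1+E_2)+\tfrac{1}{\sqrt{2}}(a+b)E_3$.

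Next I would compare the two sides in the orthonormal basis $\{E_1,E_2,E_3\}$. The $E_1$- and $E_2$-components both yield $2c(b-a)=-\tfrac{cq}{\sqrt{2}}$, and the $E_3$-component yields $2(a^2-b^2)=\tfrac{q}{\sqrt{2}}(a+b)$. Factoring out the common binomial $2(a-b)-\tfrac{q}{\sqrt{2}}$, these become
\[
c\Bigl(2(a-b)-\tfrac{q}{\sqrt{2}}\Bigr)=0,\qquad (a+b)\Bigl(2(a-b)-\tfrac{q}{\sqrt{2}}\Bigr)=0.
\]
If $a-b=\tfrac{q}{2\sqrt{2}}$, both equations hold identically, so $\exp(tX)$ is a magnetic curve with strength $q$; this settles one implication.

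For the converse, assume $\exp(tX)$ is a magnetic curve with strength $q\neq0$. If $a-b\neq\tfrac{q}{2\sqrt{2}}$, then the two displayed equations force $c=0$ and $a+b=0$, so $X=a(E_1-E_2)$ is proportional to the Reeb vector field $\xi=\tfrac{1}{\sqrt{2}}(E_1-E_2)$; in that case $\varphi X=0$ and $\mathsf{U}(X,X)=0$, and $\gamma$ is the Reeb geodesic of item a) of Theorem \ref{magnSasaki}, for which the Lorentz equation degenerates to $0=0$. Setting this exceptional direction aside, we obtain $a-b=\tfrac{q}{2\sqrt{2}}$, as claimed.

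I do not expect a serious obstacle: once the previous Proposition and formulas \eqref{LC}--\eqref{U-tensor} are in hand, the argument is linear algebra in $\mathfrak{sl}_2\mathbb{R}$. The only points that need care are the left-invariance reduction to the identity, so that checking a single vector equation suffices, and the bookkeeping of the degenerate direction $X\parallel\xi$, where the Lorentz equation holds for every $q$ and therefore does not feed into the clean characterization.
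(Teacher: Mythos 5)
Your proof is correct and follows essentially the same route as the paper: both sides of the Lorentz equation are evaluated at the identity via \eqref{LC} and \eqref{U-tensor} together with the formula for $\varphi X$, yielding the two scalar equations $2c(b-a)=-cq/\sqrt{2}$ and $2(a^2-b^2)=q(a+b)/\sqrt{2}$, which factor through the common binomial $2(a-b)-q/\sqrt{2}$. Your explicit treatment of the degenerate direction $X\parallel\xi$ (where $c=0$ and $a+b=0$, so $\mathsf{U}(X,X)=0$ and $\varphi X=0$ and the Lorentz equation reads $0=0$ for every $q$, meaning the stated ``only if'' literally fails there) is a point of care the paper passes over in silence, and it is the right way to read the proposition.
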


\begin{remark}
{\rm
If $b=-a$, in order to 
have a non-geodesic 
magnetic curve with strength $q$, we need to ask that 
$c\not=0$. In this situation, $a=q/(4\sqrt{2})$.}
\end{remark}

Since the Reeb vector field is 
\[
\xi=\frac{1}{\sqrt{2}}(E_1-E_2),
\]
the contact angle $\sigma$ of the one parameter 
subgroup $\exp(tX)$ of 
$X=aE_1+bE_2+cE_3\in\mathfrak{sl}_{2}\mathbb{R}$ 
defined by 
\[
\cos\sigma=\frac{a-b}{\sqrt{2}\sqrt{a^2+b^2+c^2}}.
\]
Therefore, if $\exp(tX)$ is a Legendre curve, it 
necessarily should be a geodesic.

\begin{example}
{\rm 
Let $X=aE_1+bE_2+cE_3\in\mathfrak{sl}_{2}\mathbb{R}$ such that 
$\det X=0$.
We shall emphasize two 
situations: 
(\texttt{(i)} $b=-a$ and $c\not=0$ and 
\texttt{(ii)} $c=0$. 
In the first case \texttt{(i)}, $c=\pm\sqrt{2}a$ and 
\[
\exp(tX)=
\left(
\begin{array}{cc}
1\pm\sqrt{2}at & \sqrt{2}at\\
-\sqrt{2}at & 1\mp\sqrt{2}at 
\end{array}
\right).
\]
The Iwasawa decomposition of $\exp(tX)$ is 
given by
\[
x(t)=\frac{\mp 4a^2t^2}{2a^2t^2+(1\mp \sqrt{2}at)^2},
\ \
y(t)=\frac{1}{2a^2t^2+(1\mp \sqrt{2}at)^2},
\ \ 
e^{i\theta(t)}=
\frac{1\mp\sqrt{2}at+i\sqrt{2}at}{\sqrt{2a^2t^2+(1\mp \sqrt{2}at)^2}}.
\]
The contact angle $\sigma$ satisfies 
$\cos\sigma=\mathrm{sgn}(a)/\sqrt{2}$, 
thus
$\sigma=\pi/4$ or $3\pi/4$.
The projected curve is the horocycle
$(x\pm 1)^2+(y-1)^2=1$.
The curve $\exp(tX)$ is a 
contact magnetic curve with 
strength $q$ if and only if 
$a=q/(4\sqrt{2})$. 

In the second case \texttt{(ii)}, 
we consider $Y=aE_1+bE_2\not=0$. Then 
$\det Y=0$ if and only if $a=0$ or $b=0$, namely 
$Y=aE_1$ or $Y=bE_2$.

$\mathtt{(ii.1)}$
In the case $Y=aE_1$, from \eqref{Exp-E1}, we obtain that
$\exp(taE_1)$ is parametrized as 
$(x(t),y(t),\theta(t))=(a\sqrt{2}t,1,0)$.
It is a contact magnetic curve 
with strength $q$ if and only if 
$a=q/(2\sqrt{2})$.

The trajectory lies in the 
nilpotent subgroup $N$. 
The contact angle is
$\cos\sigma=\pm 1/\sqrt{2}$. Thus $\sigma=\pi/4$ or 
$3\pi/4$.
The projected curve is the horizontal line $y=1$ (horocycle with base 
point $\infty$).

$\mathtt{(ii.2)}$ 
In the case $Y=bE_2$, using \eqref{Exp-E2}, it follows that
$\exp(tbE_2)$ is parametrized as
\[
x=\frac{\sqrt{2}bt}{2b^2t^2+1},\ \ 
y=\frac{1}{2b^2t^2+1},\ \ 
e^{i\theta}=\frac{1-i\sqrt{2}bt}{\sqrt{2b^2t^2+1}}.
\]
From these relations we obtain
$x^2+\left(y-\frac{1}{2}\right)^2=\frac{1}{4}$.
Thus $(x(t),y(t))$ is a horocycle. 
Note that $\exp(tY)$ has contact angle $\pi/4$ or 
$3\pi/4$. It is a 
contact magnetic curve with strength $q$ 
if and only if $b=-q/(2\sqrt{2})$.
}
\end{example}

\begin{remark}
{\rm 
We can write the projection curve 
$\beta$ for the general situation 
$\exp(tX)$ with $X=aE_1+bE_2+cE_3$ 
with $\det X=-(c^2+2ab)=0$:
\[
x(t)= \frac{\sqrt{2}\{(a+b)t+c(b-a)t^2\}}
{2b^2t^2+(1-tc)^2},\ \
y(t)= \frac{1}{2b^2t^2+(1-tc)^2}.
\]
If the parameter $t$ us eliminated, we obtain 
the following equation for $\beta$:
\begin{enumerate}
\item If $b=0$ (and hence also $c=0$), then $y-1=0$.
\item If $b\not=0$, then
\[
\left(x-\frac{c}{b\sqrt{2}}\right)^2
+\left(y-\frac{b-a}{2b}\right)^2=\frac{(b-a)^2}{4b^2}.
\]
\end{enumerate}
These formulas actually show that 
$\beta$ is a horocycle.  

Note that $t$ is not, in general, 
the arc length parameter for $\exp(tX)$.
In case $t$ is the arc length 
parameter for $\exp(tX)$, 
then $a^2+b^2+c^2=1$. The condition 
$\det X=0$ implies $|a-b|=1$ and thus, 
the contact angle $\sigma$ is obtained as 
$\cos\sigma=\pm 1/\sqrt{2}$
meaning that $\sigma=\pi/4$ or $3\pi/4$. 
The magnetic condition is $q=2\mathrm{sgn}(a-b)\>\sqrt{2}$.
Computing $\bar{q}=q-2\cos\sigma=\sqrt{2}\mathrm{sgn}(a-b)$.
Hence the necessarily condition for periodicity, 
namely $|\bar{q}|>2$ fails. The curvature 
$\kappa_{\beta}=\bar{q}/\sin\sigma=2\>\mathrm{sgn}(a-b)$.
}
\end{remark}

\begin{example}
{\rm 
Let us assume that $\det X>0$.  
First we consider $X=a(E_1-E_2)+cE_3$.
The one-parameter subgroup is 
given by 
\[
\exp(tX)=\left(
\begin{array}{cc}
\cos(\delta t)+\frac{c}{\delta}\sin(\delta t)
 & \frac{\sqrt{2}a}{\delta}\sin(\delta t)
 \\
- \frac{\sqrt{2}a}{\delta}\sin(\delta t)
 & 
\cos(\delta t)-\frac{c}{\delta}\sin(\delta t)
\end{array}
\right).
\]
Note that $\delta^2=2a^2-c^2$.
We perform the Iwasawa decomposition.
\begin{align*}
x(t)=&\frac
{-\frac{2\sqrt{2}ac}{\delta^2}\sin^2(\delta t)}
{\frac{2a^2}{\delta^2}\sin^2(\delta t)+(\cos(\delta t)-\frac{c}
{\delta}\sin(\delta t))^2},
\\
y(t)=&\frac{1}{\frac{2a^2}{\delta^2}\sin^2(\delta t)+(\cos(\delta t)-\frac{c}
{\delta}\sin(\delta t))^2},
\\
e^{i\theta(t)}=&
\frac{\cos(\delta t)-\frac{c}{\delta}\sin(\delta t)+
\frac{\sqrt{2}ai}{\delta}\sin(\delta t)}{\sqrt{\frac{2a^2}{\delta^2}
\sin^2(\delta t)+(\cos(\delta t)-\frac{c}
{\delta}\sin(\delta t))^2}}.
\end{align*}
The projected curve $\beta$ 
is expressed as
\[
\left(
x+\frac{c}{a\sqrt{2}}
\right)^2
+(y-1)^2=\frac{c^2}{2a^2}.
\]
Next we treat the case $Y=aE_1+bE_2$, for which $\det Y=-2ab=\delta^2>0$.
We have
\[
\exp(tY)=\left(
\begin{array}{cc}
\cos(\delta t)
 & \frac{\sqrt{2}a}{\delta}\sin(\delta t)
 \\
\frac{\sqrt{2}b}{\delta}\sin(\delta t)
 & 
\cos(\delta t)
\end{array}
\right).
\]
Hence the projected curve $\beta$ is 
\[
x(t)=\frac{\sqrt{2}(a+b)\sin(\delta t)
\cos(\delta t)}{\frac{2b^2}{\delta}\sin^2(\delta t)+
\delta\cos^2(\delta t)},
\ \ 
y(t)=\frac{1}{\frac{2b^2}{\delta^2}\sin^2(\delta t)+
\cos^2(\delta t)}.
\]
The implicit form of $\beta$ is 
\[
x^2
+\left(
y-\frac{b-a}{2b}
\right)^2=\left(
\frac{a+b}{2b}
\right)^2.
\]
Comparing this with \eqref{circle-eq}, we obtain
$\displaystyle
\kappa_{\beta}=\frac{2(b-a)\>\mathrm{sgn}(b)}{|b+a|}$.
The contact angle satisfies
$\displaystyle
\cos\sigma=\frac{a-b}{\sqrt{2}\>\sqrt{a^2+b^2}}$.
In both cases, the magnetic curves are periodic.}
\end{example}
\begin{example}{\rm 
Assume now that $\det X=-\delta^2<0$, for $X=aE_1+bE_2+cE_3$.
In this case we have
\[
\exp(tX)=
\left(
\begin{array}{cc}
\cosh (\delta t) +\frac{c}{\delta}\sinh(\delta t)
& \frac{\sqrt{2}a}{\delta}\sinh(\delta t)
\\
\frac{\sqrt{2}b}{\delta}\sinh(\delta t)
&
\cosh (\delta t) -\frac{c}{\delta}\sinh(\delta t)
\end{array}
\right).
\]

Let us compute the coordinates $(x,y,\theta)$ of 
$\exp(tX)$.
\begin{align*}
x(t)=&\frac{
\frac{\sqrt{2}(a+b)}{\delta}
\sinh(\delta t)\cosh(\delta t)+
\frac{\sqrt{2}(b-a)c}{\delta^2}
\sinh(\delta t)\cosh(\delta t)
}
{\frac{2b^2}{\delta^2}\sinh^{2}(\delta t)+
\left(\cosh(\delta t)-\frac{c}{\delta}\sinh(\delta t)
\right)^2},
\\
y(t)=&
\frac{1}
{\frac{2b^2}{\delta^2}\sinh^{2}(\delta t)+
\left(\cosh(\delta t)-\frac{c}{\delta}\sinh(\delta t)
\right)^2}.
\end{align*}
Then the projected curve $\beta$ is 
described as follows:
\begin{enumerate}
\item If $b\not=0$, then $\beta$ is a part of an 
open circle: 
\[
\left(x-\frac{c}{b\sqrt{2}}\right)^2+
\left(y+\frac{a-b}{2b}\right)^2=
\frac{(a+b)^2+2c^2}{4b^2}.
\]
\item If $b=0$, then $\beta$ is 
\[
\sqrt{2}\delta x+\mathrm{sgn}(c)a(1-y)=0.
\]
\end{enumerate}
In both cases, the signed curvature of $\beta$ is 
\[
\kappa_{\beta}=\frac{2\>\mathrm{sgn}(b)(b-a)}
{\sqrt{(a+b)^2+2c^2}}.
\]

}
\end{example}

\appendix
\section{Curve theory in $\mathbb{H}^2(-4)$}\label{sectionA2}
\subsection{Frenet formula in $\mathbb{H}^2(-4)$}
Let $\beta(s)=(x(s),y(s))$ be an arclength parametrized curve 
in $\mathbb{H}^2(-4)$. 
Take a global orthonormal frame field
\[
\bar{\epsilon}_1=2y\>\frac{\partial}{\partial x},
\  \
\bar{\epsilon}_2=2y\>\frac{\partial}{\partial y},
\]
then the unit tangent vector field $
\overline{T}(s)=\beta^{\prime}(s)$ is 
represented by
\[
\beta^{\prime}(s)=\frac{1}{2y(s)}\left(
x^{\prime}(s)\bar{\epsilon}_1
+y^{\prime}(s)\bar{\epsilon}_2
\right).
\]
The unit normal vector field 
$\overline{N}(s)$ is
\[
\overline{N}(s)=J\overline{T}(s)
=\frac{1}{2y(s)}\left(
-y^{\prime}(s)\bar{\epsilon}_1
+x^{\prime}(s)\bar{\epsilon}_2
\right).
\]
Then we obtain
\[
\overline{\nabla}_{\beta^{\prime}}\overline{T}=
\frac{1}{2y^2}\left\{
(x^{\prime\prime}y-2x^{\prime}y^{\prime})\bar{\epsilon}_1
+
(y^{\prime\prime}y
+(x^\prime)^2-(y^\prime)^2)\bar{\epsilon}_2
\right\}=\kappa_{\beta}J\overline{T}(s).
\]
Thus the (signed) curvature $\kappa_\beta$ is 
\[
\kappa_{\beta}=
\frac{x^{\prime}y^{\prime\prime}-
x^{\prime\prime}y^{\prime}}{4y^2}
+\frac{x^{\prime}}{y}.
\]

\subsection{Riemannian circles in $\mathbb{H}^2(-4)$}
In a Euclidean space, Riemannian circles are nothing but usual circles.
Hence, every Riemannian circle in a Euclidean space is simple and closed.
Besides, Riemannian circles in spheres are small circles and hence, 
every Riemannian circle in spheres is simple and closed, too. 
Nevertheless, in hyperbolic spaces, Riemannian circles are not necessarily closed.

\begin{proposition}
Every Riemannian circle $\beta$
of curvature $\kappa_\beta=k$ in $\mathbb{H}^2(-4)$ is a horizontal line, 
a vertical line or a part of a circle, given by the following formula:
\begin{equation}\label{circle-eq}
(x-a)^2+(y-rk)^2=4r^2.
\end{equation}
In particular, a Riemannian circle is closed if and only if $|k|>2$. 
\end{proposition}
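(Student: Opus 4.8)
The plan is to reduce the constant–curvature condition to a single first–order ODE for the angle of the tangent vector, integrate it explicitly, and then read off the three geometric types together with the periodicity criterion. This mirrors, almost verbatim, the computation already performed in Section~5 for magnetic trajectories.

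First I would use the global orthonormal frame $\bar\epsilon_1=2y\,\partial/\partial x$, $\bar\epsilon_2=2y\,\partial/\partial y$ of the previous subsection and write the unit tangent of an arclength curve $\beta$ as $\overline T=\cos\varphi\,\bar\epsilon_1+\sin\varphi\,\bar\epsilon_2$ for an angle function $\varphi(s)$. Since $[\bar\epsilon_1,\bar\epsilon_2]=-2\bar\epsilon_1$, the Koszul formula gives $\overline\nabla_{\bar\epsilon_1}\bar\epsilon_1=2\bar\epsilon_2$, $\overline\nabla_{\bar\epsilon_1}\bar\epsilon_2=-2\bar\epsilon_1$, $\overline\nabla_{\bar\epsilon_2}\bar\epsilon_1=\overline\nabla_{\bar\epsilon_2}\bar\epsilon_2=0$, whence a short computation yields
\[
\overline\nabla_{\beta'}\overline T=(\varphi'+2\cos\varphi)\,J\overline T .
\]
Equivalently, this is the Frenet formula of the preceding subsection rewritten, since $\frac{x'y''-x''y'}{4y^2}=\varphi'$ and $\frac{x'}{y}=2\cos\varphi$ along $\beta$. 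Hence $\beta$ is a Riemannian circle of curvature $k$ exactly when $\varphi'=k-2\cos\varphi$, which is precisely an equation of the type of \eqref{ODE:U} with $\sin\sigma=1$ and $\bar q=k$; in other words, $\beta$ is a Riemannian circle of curvature $k$ if and only if it is a K\"ahler magnetic curve in $\mathbb{H}^2(-4)$ of strength $k$.

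Next I would integrate. From $x'=2y\cos\varphi$, $y'=2y\sin\varphi$ one gets $(\log y)'=2\sin\varphi$; when $\varphi$ is non-constant, dividing by $\varphi'=k-2\cos\varphi$ and integrating gives $y=r(k-2\cos\varphi)$ for a constant $r\neq 0$ of the sign making $y>0$, and then $\frac{dx}{d\varphi}=\frac{x'}{\varphi'}=2r\cos\varphi$ gives $x=2r\sin\varphi+a$. Eliminating $\varphi$ produces $(x-a)^2+(y-rk)^2=4r^2$, i.e. \eqref{circle-eq}. The remaining solutions have $\varphi\equiv\varphi_0$ with $\cos\varphi_0=k/2$ (so $|k|\le2$); they satisfy $x'=ky$, $y'=2y\sin\varphi_0$, hence are Euclidean straight lines: horizontal lines $y=\mathrm{const}$ when $k=\pm2$ ($\varphi_0\in\{0,\pi\}$), the vertical line when $k=0$ ($\varphi_0=\pm\pi/2$), and oblique lines (hypercycles) otherwise, which are the limiting members of the family \eqref{circle-eq} as $r\to\infty$; I would note that ``a part of a circle'' is to be read so as to include these limit cases.

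Finally, for the closedness statement: the Euclidean circle \eqref{circle-eq} has centre $(a,rk)$ and radius $2|r|$, so its lowest point sits at height $rk-2|r|$, which is positive if and only if $|k|>2$. When $|k|>2$ the quantity $k-2\cos\varphi$ never vanishes, $\varphi$ is strictly monotone and sweeps out all values mod $2\pi$, so $\beta$ traverses the whole circle periodically and is closed; when $|k|\le2$ the angle $\varphi(s)$ converges monotonically to a zero $\varphi_\infty$ of $k-2\cos\varphi$, along which $y=r(k-2\cos\varphi)\to0$, so $\beta$ is a non-compact arc running to the ideal boundary (and the straight-line solutions are non-compact as well); hence a Riemannian circle is closed if and only if $|k|>2$. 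The step I expect to need the most care is this last qualitative analysis of the case $|k|\le2$: one must confirm that every such solution genuinely escapes to $y=0$, so that closedness really fails, and one must make explicit how the oblique-line solutions are accommodated by the stated trichotomy; the explicit integration itself is routine and runs parallel to the computations already carried out in Section~5.
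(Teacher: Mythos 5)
Your proposal is correct and is essentially the paper's own argument: your angle function $\varphi$ is exactly the paper's $\mu$ (defined by $\cos\mu=x'/(2y)$, $\sin\mu=y'/(2y)$), the key ODE $\varphi'=k-2\cos\varphi$ and its integration to $y=r(k-2\cos\varphi)$, $x=2r\sin\varphi+a$ coincide with the paper's computation, as does the case split between constant and non-constant angle and the closedness criterion $|k|>2$. The only cosmetic difference is that you rederive the Frenet formula via the covariant derivatives of the frame instead of quoting the curvature expression already given in the appendix, and you spell out the asymptotic behaviour for $|k|\le 2$ a bit more explicitly than the paper does.
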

\begin{proof}[Sketch of proof.]
We solve the following equation:
\begin{equation}\label{C1}
\frac
{x^{\prime}y^{\prime \prime}-x^{\prime \prime}y^{\prime}}
{4y^2}+\frac{x^{\prime}}{y}=k,
\end{equation}
for nonzero constant $k$.
Setting
\begin{equation*}
X:=\frac{x^{\prime}}{2y}, \ \
Y:=\frac{y^{\prime}}{2y},
\end{equation*}
the equation \eqref{C1} becomes
\begin{equation*}
\label{C3}
k=XY^{\prime}-X^{\prime}Y+2X.
\end{equation*}
Since $\beta$ is parametrized by arclength parameter, then
$X$ and $Y$ satisfies $X^2+Y^2=1$.
Consequently, we have to solve the following system
\begin{equation*}
\label{C4}
X^2+Y^2=1, \ \
XY^{\prime}-X^{\prime}Y+2X=k. 
\end{equation*}
To do this, we introduce the function $\mu=\mu(s)$, as follows
\begin{equation*}
\label{C5}
X=\cos\mu(s), \ \
Y=\sin\mu(s), 
\ \ \frac{d\mu}{ds}=k-2\cos\mu. 
\end{equation*}

In case $\mu$ is constant, we have 
$|k|\leq 2$ and 
$\sin\mu=\pm \sqrt{4-k^2}/2$.

If $k^2=4$, then $Y=0$ and hence $y$ is constant. 
Thus $\beta$ is a horizontal line. 

If $k^2\not=4$, then the system 
\begin{align}\label{C7}
\frac{dx}{ds}=&2y\cos \mu,
\\
\frac{dy}{ds}=&2y\sin \mu
\label{C8}
\end{align}
is solved as 
\[
x=(\cot\mu)y+x_0.
\]
Thus $\beta$ is an Euclidean line and it is usually known as
a \emph{hypercycle} or and \emph{equidistant line}. Of course, the particular
situation $\cos\mu=0$ leads to a vertical (half) line, which is a geodesic.

Next we consider the case $\mu$ is non-constant.
Then the derivative of $x$ and $y$ 
are given by
\begin{equation*}
\frac{dx}{ds}
=\frac{dx}{d\mu}(k-2\cos\mu),\quad 
\frac{dy}{ds}
=\frac{dy}{d\mu}(k-2\cos\mu).
\end{equation*}
From these, we get
\begin{equation}\label{C9}
\frac{dy}{y}
=\frac{2\sin\mu}{k-2\cos\mu}d\mu.
\end{equation}
Solving \eqref{C9}, we get
\begin{equation}\label{C10}
y=r(k-2\cos\mu),\ r > 0.
\end{equation}
Next, inserting
\eqref{C10} to \eqref{C7}, we have
$dx=2r\cos \mu \>d\mu$, which implies
\begin{equation*}
x=2r\sin \mu+a,\ a \in \mathbb{R}.
\end{equation*}

Hence the Riemannian circle of curvature $\kappa_\beta\not=0$ is a horizontal line, 
an equidistant line or a part of the Euclidean circle:
\[
(x-a)^2+(y-rk)^2=4r^2.
\]
It is straightforward that $\beta$ is closed if and only
if the circle lies entirely above the boundary line, equivalently to $|k|>2$.
Furthermore, $\beta$ is a horocycle if and only if
$|k|=2$.
When $|k|<2$, the curve $\beta$ is a portion of an Euclidean circle that makes non-right
angles with the boundary line.

\end{proof}

{\bf Conclusion.}
We investigate periodic contact magnetic curves in $\mathrm{SL}_2\mathbb{R}$ and give a criterion
to have periodicity. In such a way, we obtain a quantization principle for periodic contact magnetic 
curves in $\mathrm{SL}_2\mathbb{R}$ over the set of rational numbers.
This conclusion is similar to that for closed geodesics on a torus $\mathbb{T}^2$ and has a physical
meaning: every closed geodesic corresponds to a discrete set of energy levels, 
"mirroring the analogous quantization of energy levels in the model of an atom" \cite{Jan12}.
On the other hand, it is proved \cite{Iriyeh, SW} that every closed L-minimal Legendre curve in the
$3$-sphere $\mathbb{S}^3$ is a magnetic curve. These curves are known as torus knots
and Kajigaya \cite{Kajigaya} proved that they are L-unstable.

{\bf Acknowledgments.} 
This work was partially supported by Kakenhi 15K04834 (Japan).  
The first named author wishes to thank 
'Gheorghe Asachi' Technical University of Iasi and 
University 'Alexandru Ioan Cuza' of Iasi
for warm hospitality he received during his visit in November 2017.



\begin{thebibliography}{99}

\bibitem{Av}
A.~Arvanitoyeorgos,
\emph{Homogeneous manifolds whose geodesics are orbits. Recent results and some open problems},
Irish Math. Soc. Bull. {\bf 79} (2017) 5--29 (see also \texttt{arXiv:1706.09618}).


\bibitem{Bar08} M.~Barros, 
\emph{Simple geometrical models with applications in Physics}, 
CP1002, Curvature and Variational Modelling in Physics and Biophysics, 
Eds. O.J. Garay, E. Garcia-Rio and R. V\'{a}zquez-Lorenzo, AIP 2008, 71--113.
	
\bibitem{BCFR07} M.~Barros, J.~L.~Cabrerizo, M.~Fern\'{a}ndez, and A.~Romero,
    \emph{Magnetic vortex filament flows},
    J. Math. Phys. {\bf 48} (2007) 8, 082904.
    
\bibitem{BarrosRomero} M.~Barros, A.~Romero,
    \emph{Magnetic vortices},
    EPL {\bf 77} (2007), 34002:1--5.


\bibitem{BRCF05} M.~Barros, A.~Romero, J.~L.~Cabrerizo, and M.~Fern\'{a}ndez, 
	\emph{The Gauss-Landau-Hall problem on Riemannian surfaces}, 
	J. Math. Phys. {\bf 46} (2005), 112905.

	
\bibitem{Blair} D.~E.~Blair,  
			\emph{Riemannian Geometry of Contact and Symplectic Manifolds}, 
			Progress in Math. 203, 2002, Birkh\"auser, Boston-Basel-Berlin.
    
\bibitem{CFG09} J.~L.~Cabrerizo,  M.~Fern\'andez, and J.~S.~G\'omez,
    \emph{The contact magnetic flow in $3D$ Sasakian manifolds},
    J. Phys. A: Math. Theor. {\bf 42} (2009), 19, 195201.
		

\bibitem{jm:Com87} A.~Comtet,
    {\em On the Landau levels on the hyperbolic plane},
    Ann. Physics, {\bf 173} (1987) 1, 185--209.

\bibitem{DeTG2008} D.~De Turck and H.~Gluck,
\emph{Electrodynamics and the Gauss linking integral on the 3-sphere and in hyperbolic 3-space}, 
J. Math. Phys. {\bf 49} (2008), 023504.

\bibitem{DeTG2013} D.~De Turck and H.~Gluck,
\emph{Linking, twisting, writhing, and helicity on the 3-sphere and in hyperbolic 3-space}, 
J. Differential Geom. {\bf 94} (2013), 87--128.

\bibitem{DIMN1} S.~L.~Dru\c t\u a-Romaniuc, J.~Inoguchi, M.~I.~Munteanu, and A.~I.~Nistor,
	\emph{Magnetic curves in Sasakian manifolds}, J. Nonlinear Math. Phys. 
	{\bf 22} (2015), no.~3, 428--447.

\bibitem{DIMN2} S.~L.~Dru\c t\u a-Romaniuc, J.~Inoguchi, M.~I.~Munteanu, and A.~I.~Nistor,
	\emph{Magnetic curves in cosymplectic manifolds}, Reports Math. Phys. {\bf 78} (2016) 1, 33--48.

\bibitem{GM} B.~Gabrovsek, M.~Mroczkowski, 
\emph{Knots in the solid torus up to 6 crossing}, 
J. Knot Theory and Its Ramifications \textbf{21} (2012), no.~11, 1250106 (43 pages).

\bibitem{I2004}
J.~Inoguchi, 
\emph{Invariant minimal surfaces in 
the real special linear group of 
degree $2$},
Italian J. Pure Appl. Math. 
\textbf{16} (2004), 61--80. 
\texttt{arXiv:0910.3104}

\bibitem{I2004CM}
J.~Inoguchi, 
\emph{Submanifolds with harmonic mean curvature vector 
field in contact $3$-manifolds},
Coll. Math. 
\textbf{100} (2004), no.~2, 163--179.

\bibitem{IKOS3}
J.~Inoguchi, T.~Kumamoto,
N.~Ohsugi and
Y.~Suyama,
\emph{Differential geometry of curves and surfaces in 
$3$-dimensional homogeneous spaces 
$\mathrm{I}\!\mathrm{I}\!\mathrm{I}$},
Fukuoka Univ. Sci. Rep.
{\bf 30} (2000),
131--160.

\bibitem{IL}
J.~Inoguchi and J.-E.~Lee, 
\emph{
Slant curves in  $3$-dimensional almost contact metric geometry}, 
Internat. Elect. 
J. Geom. \textbf{8} (2015), 
no.~2, 106--146.

\bibitem{IM1} J.~Inoguchi and M.~I.~Munteanu, 
	\emph{Periodic magnetic curves in Berger spheres}, T{\^o}hoku Math. J. 
	{\bf 69} (2017), no.~1, 113--128.


\bibitem{Iriyeh} H.~Iriyeh, 
\emph{Hamiltonian minimal Lagrangian cone in 
$\mathbf{C}^m$}, Tokyo J. Math. {\bf 28} 
(2005), 1, 91--107.

	
\bibitem{Jan12} R.~T.~Jantzen, 
\emph{Geodesics on the torus and other surfaces of revolution clarified using
undergraduate physics tricks with bonus: nonrelativistic and relativistic Kepler problems},
arXiv:1212.6206v1 [math.DG] (2012)


\bibitem{Kajigaya}
T.~Kajigaya, 
\emph{Second variational formula and the stability of 
Legendrian minimal submanifolds in Sasakian
manifolds}, T{\^o}hoku Math. J. {\bf 65} (2013), 4, 523--543.

\bibitem{Kokubu}
M.~Kokubu,
\emph{On minimal surfaces in the real
special linear group}
$SL(2,{\mathbf R})$,
Tokyo J. Math.
{\bf 20} (1997), no.~2, 
287--297.


\bibitem{KV}
O.~Kowalski, L.~Vanhecke, 
\emph{Riemannian manifolds with homogeneous geodesics}, 
Boll. Un. Math. Ital. B (7) \textbf{5} (1991), no.~1, 189--246.

\bibitem{Nistor}
A.~I.~Nistor, 
\emph{Motion of charged particles in a Killing magnetic field in 
$\mathbb{H}^2\times\mathbb{R}$}, 
Rend. Sem. Mat. Univ.
Politec. Torino \textbf{73/1} (2016), 
no.~3-4, 161--170 

\bibitem{OR}
C.~Oberti, 
R.~L.~Ricca, 
\emph{Energy and helicity of magnetic torus knots and braid},
Fluid Dyn. Res. {\bf 50} (2018) 011413 (13pp).

\bibitem{Profir}	
M.~M.~Profir, 
\emph{Sugli spazi omogenei di dimensione tre $SO(2)$-isotropi},
Dottorato di Ricerca in Math., Univ. Cagliari, 2008.

\bibitem{RN2011}
R.~L.~Ricca, B.~Nipoti,
\emph{Gauss' linking number revisited}, 
J. Knot Theory and Its Ramifications 
\textbf{20} (2011), no.~10, 1325--1343.


\bibitem{SW}
R.~Schoen and J.~Wolfson, 
\emph{Minimizing area among Lagrangian surfaces: 
The mapping problem}, J. Differential Geom., {\bf 58} (2001), 1, 1--86.	


\bibitem{Sitzia}	
P.~Sitzia, 
\emph{Explicit formulas for geodesics of three-dimensional homogeneous manifolds with
isometry group of dimension $4$ or $6$}, preprint, Univ. Cagliari, 1989.
	
	
\bibitem{jm:Sun93} T.~Sunada,
    {\em Magnetic flows on a Riemann surface},
    Proceedings of KAIST Mathematics Workshop, 1993, 93--108.

\bibitem{Taub07} C.~H.~Taubes, 
\emph{The Seiberg-Witten equations and the Weinstein conjecture}, 
Geom. Topol. {\bf 11} (2007) 4, 2117--2202.


\end{thebibliography}
\end{document}